\newtheorem{thm}{Theorem}[section]
\newtheorem{lem}[thm]{Lemma}
\newtheorem{cor}[thm]{Corollary}
\newtheorem{prop}[thm]{Proposition}
\newtheorem{ex}[thm]{Example}
\newtheorem*{prob*}{Open problem}
\theoremstyle{definition}
\newtheorem{defi}[thm]{Definition}
\theoremstyle{remark}
\newtheorem{rem}[thm]{Remark}
\newtheorem*{rem*}{Remark}
\DeclareMathOperator{\Hom}{Hom}
\newcommand{\kringel}{\mathbin{\raise1pt\hbox{$\scriptstyle\circ$}}} 
\newcommand{\pkt}{\mathbin{\raise0pt\hbox{$\scriptstyle\bullet$}}}
\newcommand{\C}{\mathbb{C}}
\newcommand{\N}{\mathbb{N}}
\newcommand{\Z}{\mathbb{Z}}
\newcommand{\tr}{\mathop{\rm tr}}
\newcommand{\End}{\mathop{\rm End}}
\newcommand{\Der}{\mathop{\rm Der}}
\newcommand{\Lg}{\mathfrak{g}}
\newcommand{\Lr}{\mathfrak{r}}
\newcommand{\CL}{\mathcal{L}}
\newcommand{\CP}{\mathcal{P}}
\newcommand{\CR}{\mathcal{R}}
\newcommand{\CZ}{\mathcal{Z}}
\newcommand{\rank}{{\rm rank}}
\newcommand{\al}{\alpha}
\newcommand{\be}{\beta}
\newcommand{\ga}{\gamma}
\newcommand{\ep}{\varepsilon}
\newcommand{\la}{\lambda}
\newcommand{\ov}{\overline}
\newcommand{\ra}{\rightarrow}
\renewcommand{\phi}{\varphi}
\begin{document}

\title[Degenerations]{Degenerations of pre-Lie algebras}
%  Die Kurzfassung kommt oben ueber die Seiten, sie steht in eckigen Klammern
%  Auch Autorennamen koennen eine Kurzfassung haben

\author[T. Bene\v{s}]{Thomas Bene\v{s}}
\author[D. Burde]{Dietrich Burde}

\address{Fakult\"at f\"ur Mathematik\\
Universit\"at Wien\\
  Nordbergstr. 15\\
  1090 Wien \\
  Austria} 
\email{thomas.benes@univie.ac.at}
\email{dietrich.burde@univie.ac.at}
\date{\today}

\subjclass{Primary 17B30, 17D25}

\begin{abstract}
We consider the variety of pre-Lie algebra structures on a given $n$-dimensional vector space.
The group $GL_n(K)$ acts on it, and we study the closure of the orbits with respect to the 
Zariski topology. This leads to the definition of pre-Lie algebra degenerations. We give
fundamental results on such degenerations, including invariants and necessary degeneration 
criteria. We demonstrate the close relationship to Lie algebra degenerations.
Finally we classify  all orbit closures in the variety of complex $2$-dimensional pre-Lie 
algebras. 
\end{abstract}

\maketitle

\section{Introduction}

Contractions of Lie algebras are limiting processes between Lie algebras, which have
been studied first in physics \cite{SEG},\cite{IWI}. For example, classical mechanics
is a limiting case of quantum mechanics as $\hbar \ra 0$, described by a contraction of the
Heisenberg-Weyl Lie algebra to the abelian Lie algebra of the same dimension. \\
In mathematics, often a more general definition of contractions is used, so called 
degenerations.
Here one considers the variety of $n$-dimensional Lie algebra structures and the
orbit closures with respect to the Zariski topology of $GL_n(K)$-orbits. There is a large
literature on degenerations, see for example \cite{NEP} and the references cited therein.
Degenerations have been studied also for commutative algebras, associative algebras and
Leibniz algebras. Of course, orbit closures and hence degenerations can be considered for
all algebras. However, we are particularly interested in so called {\it pre-Lie algebras},
which have very interesting applications in geometry and physics, see \cite{BU2} for a survey.
This class of algebras also includes Novikov algebras. For applications of Novikov
algebras in physics, see \cite{BAL}.\\
The aim of this article is to provide a degeneration theory for pre-Lie algebras, and to
find interesting invariants, which are preserved under the process of degeneration.
It turns out, that among other things such invariants are given by
polynomial operator identities $T(x,y)=0$ in the operators
$L(x),L(y),R(x),R(y)$, the left and right multiplications of the pre-Lie algebra.
For example, the identity $T(x,y)=L(x)R(y)-R(y)L(x)=0$ for all $x,y\in A$ says 
that the algebra $A$ is associative. This is preserved under degeneration. \\
On the other hand we find semi-continuous functions on the variety of $n$-dimensional 
pre-Lie algebra structures. An example is given by the dimension of the center
$\CZ(A)$ of a pre-Lie algebra. If a pre-Lie algebra $A$ degenerates to a pre-Lie
algebra $B$, then $\dim \CZ(A)\le \dim\CZ(B)$. The function $f(\la)=\dim \CZ(\la)$ is
an upper semi-continuous function on  the variety of pre-Lie algebra structures. 
We may also consider the dimensions of left and right annihilators, or in fact
of various other spaces, like certain subalgebras and cohomology spaces.
Since a pre-Lie algebra in general is not anti-commutative, we often have two possibilities
(like right and left annihilators), where we had only one in the Lie algebra case. \\
Furthermore we introduce generalized derivation algebras. The dimension of these
spaces are again upper semi-continuous functions. More generally, certain generalized 
cohomology spaces can be studied in this context. \\
Finally we apply our results to classify all orbit closures in the variety of 
$2$-dimensional pre-Lie algebras. The resulting Hasse diagram shows how 
complicated the situation is already in dimension $2$.

\section{The variety of pre-Lie algebras}

Pre-Lie algebras, or left-symmetric algebras arise in many areas of mathematics
and physics. It seems that A. Cayley in $1896$ was the first one to introduce pre-Lie 
algebras, in the context of rooted tree algebras. From $1960$ onwards they became
widely known by the work of Vinberg, Koszul and Milnor in connection with
convex homogeneous cones and affinely flat manifolds. Around $1990$ they appeared in
renormalization theory and quantum mechanics, starting with the work of Connes and Kreimer.
For the details and the references see \cite{BU2}. The definition is as follows:

\begin{defi}
A $K$-algebra $A$ together with a bilinear product $(x,y)\mapsto x\cdot y$ is called
a {\it pre-Lie algebra}, if the identity
\begin{align}\label{lsa}
(x\cdot y) \cdot z- x \cdot (y \cdot z) & = (y\cdot x) \cdot z- y \cdot (x \cdot z)
\end{align}
holds for all $x,y,z\in A$. A pre-Lie algebra is called a {\it Novikov algebra}, if the
identity
\begin{equation}\label{nov}
(x\cdot y)\cdot z=(x\cdot z)\cdot y
\end{equation}
holds for all $x,y,z\in A$.
\end{defi}

The commutator $[x,y]=x\cdot y-y\cdot x$ in a pre-Lie algebra defines a Lie bracket.
We denote the associated Lie algebra by $\Lg_A$.

\begin{rem}
If $(x,y,z)=(x\cdot y) \cdot z- x \cdot (y \cdot z)$ denotes the associator in $A$, then
$A$ is a pre-Lie algebra, if and only if $(x,y,z)=(y,x,z)$ for all $x,y,z$ in $A$. 
For this reason, $A$ is also called a {\it left-symmetric algebra}.
\end{rem}

In analogy to the variety $\CL_n(K)$ of $n$-dimensional Lie algebra structures 
we can define the variety of arbitrary non-associative algebras. 
We want to focus here on pre-Lie algebras. Let $V$ be a vector space of 
dimension $n$ over a field $K$. Fix a basis $(e_1,\ldots,e_n)$ of $V$. 
If $(x,y)\mapsto x\cdot y$ is a pre-Lie algebra product on $V$ with 
$e_i\cdot e_j= \sum_{k=1}^n c_{ij}^k e_k$, then $(c_{ij}^k)\in K^{n^3}$
is called a {\it pre-Lie algebra structure} on $V$.

\begin{defi}
Let $V$ be an $n$-dimensional vector space over a field $K$. 
Denote by $\CP_n(K)$ the set of all pre-Lie algebra structures on $V$.
This is called the {\it variety of pre-Lie algebra structures}.
\end{defi}

The set $\CP_n(K)$ is an affine algebraic set, since the identity \eqref{lsa} is given by
polynomials in the structure constants $c_{ij}^k$. It need not be irreducible, however.
Denote by $\mu$ a pre-Lie algebra product on $V$. 
The general linear group $GL_n(K)$ acts on $\CP_n(K)$ by
\begin{equation*}
(g\kringel \mu)(x,y)=g(\mu(g^{-1}x, g^{-1}y))
\end{equation*}
for $g\in GL_n(K)$ and $x,y\in V$. \\
Denote by $O(\mu)$ the orbit of $\mu$ under this action, and by $\ov{O(\mu)}$ the closure 
of the orbit with respect to the Zariski topology. 
The orbits in $\CP_n(K)$ under this action correspond to isomorphism classes of
$n$-dimensional pre-Lie algebras. 

\begin{defi}
Let $\la,\mu \in \CP_n(K)$ be two pre-Lie algebra laws.
We say that $\la$ {\it degenerates} to $\mu$, if $\mu \in \ov{O(\la)}$. This is denoted by 
$\la \ra_{\rm deg} \mu$. We say that the degeneration $\la \ra_{\rm deg} \mu$ is {\it proper}, 
if $\mu \in \ov{O(\la)}\setminus O(\la)$, i.e., if $\la$ and $\mu$ are not isomorphic.
\end{defi}

The existence of a pre-Lie algebra degeneration $A\ra_{\rm deg}B$ means the following:
the algebra $B$ is represented by a structure $\mu$ which
lies in the Zariski closure of the $GL_n(K)$-orbit of some structure $\la$ which
represents $A$. 

The following important result is due to Borel \cite{BOR}:

\begin{prop}
If $G$ is a complex algebraic group and $X$ is a complex algebraic variety
with regular action, then each orbit $G(x)$, $x\in X$ is a smooth algebraic variety, 
open in its closure $\ov{G(x)}$. Its boundary $\ov{G(x)}\setminus G(x)$ is a union of 
orbits of strictly lower dimension. Each orbit $G(x)$ is a constructible set, 
hence $\ov{G(x)}$ coincides with the closure $\ov{G(x)}^d$ in the standard Euclidean topology. 
\end{prop}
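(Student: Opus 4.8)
The plan is to assemble this statement from the standard structure theory of algebraic group actions, the two engines being Chevalley's theorem on images of morphisms and the homogeneity supplied by the transitive $G$-action on each orbit. Throughout I would write $\phi_x\colon G\to X$, $g\mapsto g\cdot x$, for the orbit morphism, whose set-theoretic image is exactly $G(x)$. The key elementary observation is that for every $g\in G$ the translation $y\mapsto g\cdot y$ is an automorphism of the variety $X$ which maps $G(x)$ onto itself, and hence maps $\ov{G(x)}$ onto itself; this lets one transport any local information around the orbit.

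First I would establish constructibility. Since $\phi_x$ is a morphism of varieties, Chevalley's theorem says its image $G(x)$ is a constructible subset of $X$; as $G=GL_n(\C)$ is irreducible, $G(x)$ and therefore $\ov{G(x)}$ are irreducible, and a constructible set contains a nonempty Zariski-open subset $U$ that is dense in its closure. To see that $G(x)$ is open in $\ov{G(x)}$ I would translate $U$: for each $g\in G$ the set $g\cdot U$ is open in $g\cdot\ov{G(x)}=\ov{G(x)}$ and is contained in $G(x)$, and by transitivity $G(x)=\bigcup_{g\in G}g\cdot U$ is a union of sets open in $\ov{G(x)}$, hence open in $\ov{G(x)}$.

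The boundary $\partial=\ov{G(x)}\setminus G(x)$ is then closed in $\ov{G(x)}$, and it is $G$-stable because both $\ov{G(x)}$ and $G(x)$ are; therefore $\partial$ is a Zariski-closed union of $G$-orbits. For the dimension claim I would note that $U$ dense in $\ov{G(x)}$ forces $\dim G(x)=\dim\ov{G(x)}$, while $\partial$ is a proper closed subset of the irreducible variety $\ov{G(x)}$, so $\dim\partial<\dim\ov{G(x)}=\dim G(x)$; every orbit inside $\partial$ thus has strictly smaller dimension. Smoothness of $G(x)$ follows from homogeneity (equivalently from $G(x)\cong G/G_x$): the smooth locus of the variety $G(x)$ is nonempty, open, dense and $G$-stable, so by transitivity it is all of $G(x)$.

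It remains to identify the Zariski closure with the Euclidean closure $\ov{G(x)}^d$. One inclusion is automatic, since every Zariski-closed set is Euclidean-closed, giving $\ov{G(x)}^d\subseteq\ov{G(x)}$. For the reverse I would invoke the classical fact that a nonempty Zariski-open subset of an irreducible complex variety is dense in the Euclidean topology; applied to $U\subseteq\ov{G(x)}$ this gives $\ov{G(x)}=\ov{U}^d\subseteq\ov{G(x)}^d$, whence equality. The hard part, and the only genuinely analytic input, is exactly this last fact: it rests on the local description of complex varieties (for instance, Noether normalization realizing a Euclidean neighbourhood as a branched cover of a polydisc, together with connectedness of the smooth locus in the Euclidean topology). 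Everything else is formal once Chevalley's theorem and the homogeneity of orbits are available, which is why the statement can be attributed to Borel \cite{BOR} and simply cited rather than reproved here.
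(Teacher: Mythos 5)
Your argument is correct, and there is nothing in the paper to compare it against in detail: the paper states this proposition as a known result due to Borel and simply cites \cite{BOR} without proof. What you have written is exactly the canonical proof that the citation outsources --- Chevalley's theorem for constructibility of the image of the orbit map $\phi_x$, translation of a dense open subset $U\subseteq G(x)$ to get openness of $G(x)$ in $\ov{G(x)}$, $G$-stability of the closed boundary plus the dimension estimate for proper closed subsets, homogeneity for smoothness, and the classical fact that a nonempty Zariski-open subset of an irreducible complex variety is dense in the Euclidean topology for the equality $\ov{G(x)}=\ov{G(x)}^d$. All of these steps are sound as stated.

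One caveat worth flagging: the proposition is formulated for an arbitrary complex algebraic group $G$, but at two points you assume $G$ is irreducible (you even write $G=GL_n(\C)$) --- once to make $\ov{G(x)}$ irreducible for the boundary-dimension count, and once to apply the Euclidean-density fact. For disconnected $G$ one passes to the identity component $G^0$, which is normal of finite index: $G(x)$ is then a finite union of $G^0$-orbits (since $gG^0x=G^0gx$), all of the same dimension because they are translates of one another, $\ov{G(x)}$ is the corresponding finite union of irreducible closures, and both the estimate $\dim\bigl(\ov{G(x)}\setminus G(x)\bigr)<\dim G(x)$ and the identity of Euclidean and Zariski closures are checked on each irreducible piece separately. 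In the paper's actual application the acting group is $GL_n(\C)$, which is connected, so your proof applies verbatim to everything the paper uses --- in particular to Remark 2.8, where the equality of the two closures is what justifies realizing degenerations by sequential contractions.
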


Recall that a subset $Y\subseteq X$ is called constructible if it is a finite union of
locally closed sets. The result has some interesting consequences:

\begin{cor}
Denote by $\C(t)$ the field of fractions of the polynomial ring $\C[t]$. If there
exists an operator $g_t \in GL_n(\C(t))$ such that
$\lim_{t \to 0}g_t \kringel \la=\mu$ for $\la,\mu\in \CP_n(\C)$, then $\la \ra_{\rm deg} \mu$.
\end{cor}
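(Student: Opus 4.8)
The plan is to realize $\mu$ as a point in the Euclidean closure of the orbit $O(\la)$ and then to invoke the equality of the two closures furnished by the preceding Proposition.

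First I would examine the specialization of $g_t$ at concrete values of $t$. The entries of $g_t$ are rational functions in $t$, and $\det g_t$ is a nonzero element of $\C(t)$; hence there are only finitely many $t_0\in\C$ at which some entry of $g_t$ has a pole or at which $\det g_t$ vanishes or has a pole. For every other $t_0$ the matrix $g_{t_0}$ is a well-defined element of $GL_n(\C)$, so $g_{t_0}\kringel\la\in O(\la)$. Thus, shrinking to a punctured neighborhood $U$ of $0$ that avoids these finitely many bad points, the assignment $\phi\colon t\mapsto g_t\kringel\la$ defines a curve lying entirely inside $O(\la)$.

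Next I would use the explicit form of the action. Since $(g\kringel\mu)(x,y)=g(\mu(g^{-1}x,g^{-1}y))$, each structure constant of $g_t\kringel\la$ is a polynomial in the entries of $g_t$ and of $g_t^{-1}$, and the entries of $g_t^{-1}$ are themselves rational functions of $t$ by Cramer's rule. Hence each coordinate of $\phi(t)$ is a rational function of $t$, and $\phi$ is continuous on $U$ with respect to the Euclidean topology. The hypothesis $\lim_{t\to 0}g_t\kringel\la=\mu$ says precisely that $\phi(t)\to\mu$ in the Euclidean topology as $t\to 0$ through values in $U$. Consequently $\mu$ lies in the Euclidean closure $\ov{O(\la)}^{d}$ of $O(\la)$.

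Finally I would invoke the Proposition, which asserts that for the regular action of the algebraic group $GL_n(\C)$ the Euclidean closure $\ov{O(\la)}^{d}$ coincides with the Zariski closure $\ov{O(\la)}$. Therefore $\mu\in\ov{O(\la)}$, which is by definition the assertion $\la\ra_{\rm deg}\mu$. The one point requiring care is the first step, namely ensuring that $g_t$ genuinely specializes into $GL_n(\C)$ for $t$ near $0$; this is exactly where the passage from the formal parameter field $\C(t)$ to honest complex matrices takes place, and it rests only on the fact that a nonzero rational function has finitely many zeros and poles.
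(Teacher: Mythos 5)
Your proposal is correct and follows the same route as the paper, which simply observes that the hypothesis places $\mu$ in the Euclidean closure $\ov{O(\la)}^{d}$ and then invokes the preceding Proposition to conclude $\mu\in\ov{O(\la)}$. Your additional care about specializing $g_t$ at all but finitely many values of $t$ is a legitimate expansion of what the paper leaves implicit in the phrase ``by assumption,'' but it is not a different argument.
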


\begin{proof}
We have $\mu\in \ov{O(\la)}^d$ by assumption, which implies $\mu\in \ov{O(\la)}$.
\end{proof}

\begin{ex}
Any $n$-dimensional complex pre-Lie algebra $\la$ degenerates to the
zero pre-Lie algebra $\C^n$.
\end{ex}

Let $g_t=t^{-1}E_n$, where $E_n$ is the identity matrix. Then we have 
\[
(g_t\kringel \la)(x,y)=t^{-1}\la(tx,ty)=t \la(x,y),
\]
hence $\la$ degenerates to the zero product, i.e., $\lim_{t \to 0}g_t \kringel \la=\C^n$.

\begin{rem}\label{2.8}
Borel's result implies also the following (the argument is the same as the one
given in \cite{NEP} for Lie algebras).
Every degeneration of complex Novikov algebras can be realized by a so called sequential 
contraction, i.e., $A\ra_{\rm deg} B$ is equi\-valent to the fact that we have
\[
\lim_{\ep \to 0}g_{\ep} \kringel \la=\mu
\]
where $g_{\ep}\in GL_n(\C)$, $\ep>0$ and $\la,\mu\in \CP_n(\C)$, such that $\la$ and 
$\mu$ represent the pre-Lie algebras $A$ and $B$ respectively. 
\end{rem}

\begin{cor}
The process of degeneration in $\CP_n(\C)$ defines a partial order on the orbit space
of $n$-dimensional pre-Lie algebra structures, given by $O(\mu)\le O(\la) \iff \mu 
\in \ov{O(\la)}$.
\end{cor}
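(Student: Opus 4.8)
The plan is to verify the three defining properties of a partial order --- reflexivity, transitivity, and antisymmetry --- for the relation $\le$ on the orbit space. Before doing so, I would first check that $\le$ is well defined on orbits, i.e., that the truth of $\mu\in\ov{O(\la)}$ depends only on the orbit $O(\mu)$ and not on the chosen representative $\mu$. This follows because each $g\in GL_n(\C)$ acts as a homeomorphism of $\CP_n(\C)$ in the Zariski topology (the map $\nu\mapsto g\kringel\nu$ is linear in the structure constants with polynomial inverse given by $g^{-1}$), so $g\kringel\ov{O(\la)}=\ov{g\kringel O(\la)}=\ov{O(\la)}$. Thus $\ov{O(\la)}$ is $GL_n(\C)$-invariant, and in particular $\mu\in\ov{O(\la)}$ forces $O(\mu)\subseteq\ov{O(\la)}$.

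Reflexivity is immediate, since $\la\in O(\la)\subseteq\ov{O(\la)}$ gives $O(\la)\le O(\la)$. For transitivity, suppose $O(\nu)\le O(\mu)$ and $O(\mu)\le O(\la)$, that is, $\nu\in\ov{O(\mu)}$ and $\mu\in\ov{O(\la)}$. By the invariance of the closure just noted, $O(\mu)\subseteq\ov{O(\la)}$, and taking closures yields $\ov{O(\mu)}\subseteq\ov{O(\la)}$. Hence $\nu\in\ov{O(\mu)}\subseteq\ov{O(\la)}$, so $O(\nu)\le O(\la)$.

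The main obstacle is antisymmetry, and this is where I would invoke Borel's result (the Proposition above). Assume $O(\mu)\le O(\la)$ and $O(\la)\le O(\mu)$. Running the transitivity argument in both directions gives $\ov{O(\mu)}\subseteq\ov{O(\la)}$ and $\ov{O(\la)}\subseteq\ov{O(\mu)}$, so the two orbits share a common closure $C:=\ov{O(\la)}=\ov{O(\mu)}$. Suppose, for contradiction, that $O(\mu)\neq O(\la)$. Then $\mu\in C\setminus O(\la)$, since $\mu\in O(\la)$ would force $O(\mu)=O(\la)$. By the Proposition the boundary $C\setminus O(\la)=\ov{O(\la)}\setminus O(\la)$ is a union of orbits of strictly lower dimension, so $\dim O(\mu)<\dim O(\la)$. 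The symmetric argument, applied to $\la\in C\setminus O(\mu)$, gives $\dim O(\la)<\dim O(\mu)$, a contradiction. Therefore $O(\mu)=O(\la)$, which establishes antisymmetry and completes the proof that $\le$ is a partial order.
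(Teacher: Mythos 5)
Your proposal is correct, and for reflexivity and transitivity it matches the paper's proof: the paper's key observation for transitivity, $O(\la)\subseteq \ov{O(\mu)} \iff \ov{O(\la)}\subseteq \ov{O(\mu)}$, is exactly the closure-invariance step you spell out (and your preliminary check that $\le$ is well defined on orbits, via the $GL_n(\C)$-invariance of $\ov{O(\la)}$, is a point the paper passes over silently, though it uses it implicitly). Where you diverge is antisymmetry: the paper deduces it in one line from the clause of Borel's Proposition stating that each orbit is \emph{open} in its closure --- if $\ov{O(\la)}=\ov{O(\mu)}=C$ with $O(\la)\neq O(\mu)$, then $O(\la)$ and $O(\mu)$ are disjoint, open, and dense in $C$, which is impossible for a nonempty $C$ (equivalently, the boundary $\ov{O(\la)}\setminus O(\la)$ is closed and would contain $\ov{O(\mu)}\ni\la$, a contradiction). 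You instead invoke the other clause of the same Proposition, that the boundary is a union of orbits of \emph{strictly lower dimension}, and derive the contradiction $\dim O(\mu)<\dim O(\la)<\dim O(\mu)$. Both deductions rest on the same Proposition quoted in the paper, so nothing external is needed in either case; your dimension argument is slightly less economical but more explicit about why coincidence of closures forces equality of orbits, while the paper's openness argument is the more purely topological route and does not need the dimension statement at all.
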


\begin{proof}
The relation is clearly reflexive. The transitivity follows from the fact that
$O(\la)\subseteq \ov{O(\mu)} \iff \ov{O(\la)}\subseteq \ov{O(\mu)}$.
Finally, antisymmetry follows from the fact, that any orbit is open in its closure.
\end{proof}

The transitivity is very useful. If $\la \ra_{\rm deg} \mu$ and  $\mu \ra_{\rm deg} \nu$,
then $\la \ra_{\rm deg} \nu$.
For $\la\in \CP_n(K)$ we have $\dim O(\la)=\dim \End(V)-\dim \Der(\la)=n^2-\dim \Der(\la)$.
Here $\Der(\la)$ denotes the derivation algebra of the algebra $A$ represented by $\la$.

\begin{cor}
Let $\la\ra_{\rm deg} \mu$ be a proper degeneration in $\CP_n(\C)$. Then $\dim O(\la)>\dim O(\mu)$
and $\dim \Der(\la)<\dim \Der (\mu)$.
\end{cor}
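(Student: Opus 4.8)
The plan is to read the strict dimension drop directly off Borel's result, and then to translate it into the statement about derivation algebras using the orbit-dimension formula $\dim O(\la)=n^2-\dim\Der(\la)$ recorded just above.

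First I would unwind the definition of a proper degeneration: $\la\ra_{\rm deg}\mu$ proper means $\mu\in\ov{O(\la)}\setminus O(\la)$. Since the $GL_n(\C)$-action is regular and the orbit closure $\ov{O(\la)}$ is invariant under this action, the whole orbit $O(\mu)$ is contained in $\ov{O(\la)}$. As distinct orbits are disjoint and $\mu\notin O(\la)$, we in fact have $O(\mu)\subseteq\ov{O(\la)}\setminus O(\la)$. Borel's result states precisely that this boundary is a union of orbits of strictly lower dimension than $O(\la)$. Because $O(\mu)$ is one of these boundary orbits, I conclude $\dim O(\mu)<\dim O(\la)$, which is the first assertion.

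For the second assertion I would simply substitute the dimension formula. Writing $\dim O(\la)=n^2-\dim\Der(\la)$ and $\dim O(\mu)=n^2-\dim\Der(\mu)$, the strict inequality $\dim O(\la)>\dim O(\mu)$ is equivalent to $-\dim\Der(\la)>-\dim\Der(\mu)$, that is, $\dim\Der(\la)<\dim\Der(\mu)$. This finishes the argument.

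There is no genuine obstacle here: essentially all of the content is carried by Borel's result, whose hypotheses (the complex algebraic group $GL_n(\C)$ acting regularly on the complex variety $\CP_n(\C)$) are plainly satisfied. The only point deserving a word of care is the passage from the single boundary point $\mu$ to its full orbit $O(\mu)$ lying inside $\ov{O(\la)}\setminus O(\la)$, which is what lets me invoke the ``union of orbits of strictly lower dimension'' clause; this follows from the $GL_n(\C)$-invariance of the closure together with the disjointness of distinct orbits.
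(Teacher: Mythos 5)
Your argument is correct and is exactly the route the paper intends: the corollary is stated without proof as an immediate consequence of Borel's proposition (the boundary $\ov{O(\la)}\setminus O(\la)$ is a union of orbits of strictly lower dimension) together with the formula $\dim O(\la)=n^2-\dim\Der(\la)$ recorded just before it. Your extra remark that $O(\mu)\subseteq\ov{O(\la)}\setminus O(\la)$, via $GL_n(\C)$-invariance of the closure and disjointness of orbits, is precisely the small step the paper leaves implicit.
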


We can represent the degenerations in $\CP_n(K)$ with respect to the above partial order
in a diagram: order the pre-Lie algebras by orbit dimension in $\CP_n(K)$, in each row the
algebras with the same orbit dimension, on top the ones with the biggest orbit dimension. 
Draw a directed arrow between two algebras $A$ and $B$,
if $A$ degenerates to $B$. This diagram is called the {\it Hasse diagram} of degenerations
in $\CP_n(K)$. It shows the classification of orbit closures. \\
A rather trivial example is the case of $1$-dimensional complex pre-Lie algebras.
Let $(e_1)$ be a basis of $\C$. Then there are two pre-Lie algebras. Denote by $P_1$ the
algebra with zero product, and by $P_2$ the algebra with $e_1\cdot e_1=e_1$. Then
$\dim \Der(P_1)=1$ and $\dim \Der(P_2)=0$. 
The Hasse diagram for $\CP_1(\C)$ is given as follows:
$$
\begin{xy}
\xymatrix{
 P_2 \ar[d] \\
 P_1 \\
}
\end{xy}
$$

\section{Criteria for degenerations}

Given two pre-Lie algebras $A$ and $B$ we want to decide whether $A$ degenerates to $B$
or not. Suppose that the answer is yes. Then we would like to find a 
$g_t \in GL_n(\C(t))$ realizing such a degeneration. If the answer is no,
we need an argument to show that such a degeneration is impossible. 
One way is to find an invariant for $A$, i.e., a polynomial in terms of the structure
constants which is zero on the whole orbit of $A$, so that it must be also zero on the
orbit closure of $A$. If $B$ does not satisfy this polynomial equation, then $B$ cannot lie
in the orbit closure of $A$. \\
For example, commutativity of $A$ is such an invariant.
If $L(x)$ resp. $R(x)$ denotes the left resp. right multiplication operators in $\End(A)$, 
then commutativity of $A$ means that the operator $T(x)=L(x)-R(x)$ satisfies
$T(x)=0$ for all $x\in A$. This is clearly such a polynomial invariant on the orbit of $A$. 
Hence a degeneration $A\ra_{\rm deg} B$ is impossible, if $A$ is commutative, but $B$ is not. 
Another operator identity is $T(x,y)=[L(x),R(y)]=L(x)R(y)-R(y)L(x)=0$ for all
$x,y\in A$, which says that the algebra $A$ is associative.

\begin{lem}\label{3.1}
Let $A$ and $B$ be two pre-Lie algebras of dimension $n$, and $T(x_1,\ldots ,x_n)$ be a polynomial
in the operators $L(x_1),\ldots ,L(x_n)$ and  $R(x_1),\ldots ,R(x_n)$.
Suppose that $T(x_1,\ldots ,x_n)=0$ for $A$ but not for $B$. Then $A$ cannot degenerate to $B$.
\end{lem}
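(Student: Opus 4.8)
The plan is to convert the hypothesis ``$T=0$ on $A$'' into the statement that the structure $\mu$ representing $B$ lies outside a Zariski-closed, $GL_n(K)$-stable locus that contains $\ov{O(\la)}$, where $\la$ represents $A$. First I would record how the multiplication operators transform under the action. Writing $L_\mu(x)y=\mu(x,y)$ and $R_\mu(x)y=\mu(y,x)$, a direct computation from $(g\kringel\mu)(x,y)=g(\mu(g^{-1}x,g^{-1}y))$ yields
\begin{equation*}
L_{g\kringel\mu}(gx)=g\,L_\mu(x)\,g^{-1},\qquad R_{g\kringel\mu}(gx)=g\,R_\mu(x)\,g^{-1}
\end{equation*}
for every $g\in GL_n(K)$ and $x\in V$. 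That is, conjugation by $g$ intertwines the multiplication operators of $\mu$ and $g\kringel\mu$ after the substitution $x\mapsto gx$.

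Next I would use that conjugation $X\mapsto gXg^{-1}$ is an algebra automorphism of $\End(V)$. Since $T$ is built from the operators $L(x_i),R(x_i)$ by sums and noncommutative products with scalar coefficients, the inner factors $g^{-1}g$ in each monomial telescope, and one obtains
\begin{equation*}
T_{g\kringel\mu}(gx_1,\ldots,gx_n)=g\,T_\mu(x_1,\ldots,x_n)\,g^{-1}.
\end{equation*}
Because $g$ is invertible and $(x_1,\ldots,x_n)\mapsto(gx_1,\ldots,gx_n)$ is a bijection of $V^n$, this shows the vanishing condition is $GL_n(K)$-invariant: $T_\mu\equiv 0$ holds if and only if $T_{g\kringel\mu}\equiv 0$. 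Consequently the locus
\begin{equation*}
Z=\{\,\nu\in\CP_n(K):T_\nu(x_1,\ldots,x_n)=0\text{ for all }x_1,\ldots,x_n\in V\,\}
\end{equation*}
is stable under the group action.

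It then remains to check that $Z$ is Zariski-closed. With the basis $(e_1,\ldots,e_n)$ fixed, the entries of $L_\nu(x)$ and $R_\nu(x)$ are bilinear in the coordinates of $x$ and in the structure constants $c_{ij}^k$; hence each matrix entry of $T_\nu(x_1,\ldots,x_n)$ is a polynomial in the coordinates of the $x_i$ whose coefficients are polynomials in the $c_{ij}^k$. Demanding that this vanish for all $x_1,\ldots,x_n$ is equivalent to the vanishing of every such coefficient, a finite system of polynomial equations in the $c_{ij}^k$, so $Z$ is closed.

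Finally I would combine these facts. Let $\la$ represent $A$ and $\mu$ represent $B$. By hypothesis $\la\in Z$, and by invariance $O(\la)\subseteq Z$; since $Z$ is closed this gives $\ov{O(\la)}\subseteq Z$. As $B$ does not satisfy $T=0$ we have $\mu\notin Z$, whence $\mu\notin\ov{O(\la)}$ and $A\not\ra_{\rm deg}B$. The only step requiring genuine care is the equivariance identity for $L$ and $R$ together with the telescoping of the inner conjugations; once that is in place, the $GL_n(K)$-invariance of the closed locus $Z$ makes the conclusion immediate.
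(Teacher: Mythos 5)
Your proof is correct and takes essentially the same route as the paper: your equivariance identity $T_{g\kringel\mu}(gx_1,\ldots,gx_n)=g\,T_\mu(x_1,\ldots,x_n)\,g^{-1}$ is precisely the paper's observation that a base change induces a conjugation of the operator polynomial, so the vanishing condition is constant on $GL_n(K)$-orbits. The only difference is that you make explicit the Zariski-closedness of the vanishing locus $Z$ (each matrix entry of $T_\nu$ has coefficients polynomial in the $c_{ij}^k$), a step the paper compresses into the assertion that the identity then also holds on the orbit closure.
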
 

\begin{proof}
Let $\phi\colon A\ra A'$ be an isomorphism of pre-Lie algebras, and denote by $L(x),R(x)$ the
left resp. right multiplications in $A$, and by $\ell(x),r(x)$ the ones in $A'$. Then 
$\phi(x\cdot y)=\phi(x).\phi(y)$ implies
\[
L(x)=\phi^{-1}\kringel \ell (\phi(x))\kringel \phi,\quad 
R(x)=\phi^{-1}\kringel r(\phi(x))\kringel \phi.
\]
If a polynomial $T$ in the left- and right multiplications of $A$ vanishes, then the same
is true for the left-and right multiplications of all pre-Lie algebra structures in the 
$GL_n$-orbit representing $A$, since a base change just induces a conjugation of the 
operator polynomial. It follows that the operator identity holds also for all pre-Lie algebra 
structures in the orbit closure. This implies the claim.
\end{proof}

We can also consider invariants of $\la\in \CP_n(K)$ defining upper (or lower)
semi-continuous functions $f\colon \CL_n(k)\ra \Z_{\ge 0}$. Then  $\la \ra_{\rm deg} \mu$
implies $f(\la)\le f(\mu)$ or $f(\la)\ge f(\mu)$. Recall the following definition:

\begin{defi}
Let $X$ be a topological space. A function $f\colon X\ra \Z_{\ge 0}$
is called {\it upper semi-continuous}, if $f^{-1}(]-\infty,n[)$ is open
in $X$ for all $n \in \Z_{\ge 0}$. It is called {\it lower semi-continuous}, if
$f^{-1}(]n,\infty[)$ is open in $X$ for all $n \in \Z_{\ge 0}$.  
\end{defi}

In the case of Lie algebras, for example, $f(\la)=\dim Z(\la)$ is a upper semi-continuous
function on the variety of $n$-dimensional Lie algebra structures,
and satisfies $f(\la)\le f(\mu)$ for $\la \ra_{\rm deg} \mu$. There are more such
invariants yielding semi-continuous functions, for example the dimensions of cocycle
spaces and Lie algebra cohomology groups. \\
It is very natural to consider similar invariants for pre-Lie algebras $A$.
Define the left and right annihilator, and the center of $A$ by

\begin{align*}
\CL (A) & = \{ x\in A\mid x\cdot A=0 \}, \\
\CR (A) & = \{ x\in A\mid A\cdot x=0 \}, \\
\CZ (A)  & =\{x\in A\mid x\cdot A= A\cdot x=0\}.
\end{align*}

There are also cohomology groups  $H^n_{pre}(A,M)$ for pre-Lie algebras $A$, with
an $A$-bimodule $M$, see \cite{DZU}. The case, where $M=A$ is the regular module goes
already back to Nijenhuis. In this case we have 
$Z^1(A,A)=\Der (A)$, and $Z^2(A,A)$ describes infinitesimal pre-Lie algebra deformations of $A$.
The cohomology of pre-Lie algebras is related to Lie algebra cohomology as follows, see \cite{DZU}:
\[
H^n_{pre}(A,M) \cong H^{n-1}(\Lg_{A},\Hom_K(A,M)).
\]

The various dimensions of these spaces define semi-upper continuous functions:

\begin{lem}
If $A \ra_{\rm deg} B$ then 
\begin{align*}
\dim Z^n_{pre}(A,A) & \le \dim Z^n_{pre}(B,B) \\
\dim H^n_{pre}(A,A) & \le \dim H^n_{pre}(B,B) \\
\dim \CL (A) & \le \dim \CL (B) \\
\dim \CR (A) & \le \dim \CR (B) \\
\dim \CZ(A)   & \le \dim \CZ(B)
\end{align*}
\end{lem}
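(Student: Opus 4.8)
The plan is to realize each of the five quantities as the value of an upper semi-continuous function $f\colon\CP_n(K)\ra\Z_{\ge 0}$ and then to invoke the following general principle. Each such $f$ is constant on $GL_n(K)$-orbits, since it is defined from the multiplication $\la$ in a basis-free way and is therefore an isomorphism invariant (compare the conjugation argument in the proof of Lemma \ref{3.1}). Now suppose $\la$ represents $A$, that $\mu$ represents $B$, and that $A\ra_{\rm deg}B$, so that $\mu\in\ov{O(\la)}$. The set $U=\{\nu\in\CP_n(K):f(\nu)\le f(\mu)\}=f^{-1}(]-\infty,f(\mu)+1[)$ is open by upper semi-continuity and contains $\mu$; since $\mu$ lies in the closure of $O(\la)$, the open set $U$ must meet $O(\la)$. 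Choosing $\nu\in U\cap O(\la)$ gives $f(\la)=f(\nu)\le f(\mu)$, which is exactly the desired inequality. It thus remains to verify upper semi-continuity in each case.

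For the annihilators and the center I would write them as kernels of linear maps whose matrices depend polynomially, indeed linearly, on the structure constants of $\la$. Concretely, let $L_\la,R_\la\colon V\ra\End(V)$ be given by $L_\la(x)(y)=\la(x,y)$ and $R_\la(x)(y)=\la(y,x)$; then $\CL(\la)=\ker L_\la$, $\CR(\la)=\ker R_\la$, and $\CZ(\la)=\ker(L_\la\oplus R_\la)$. Viewing each of these as a single linear map out of $V$ whose matrix entries are among the $c_{ij}^k$, its rank is lower semi-continuous in $\la$, because the nonvanishing of a fixed minor is a Zariski-open, hence Euclidean-open, condition. Consequently the kernel dimension $\dim V-\rank$ is upper semi-continuous, which yields the three inequalities $\dim\CL(A)\le\dim\CL(B)$, $\dim\CR(A)\le\dim\CR(B)$ and $\dim\CZ(A)\le\dim\CZ(B)$.

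For the cocycle and cohomology spaces the same idea applies to the pre-Lie coboundary operators. Via the isomorphism $H^n_{pre}(A,A)\cong H^{n-1}(\Lg_A,\Hom_K(A,A))$ the relevant cochain spaces $C^n$ have dimension depending only on the degree $n$ and on $\dim V$, not on $\la$, while the differentials $d^n_\la$ are assembled from $\la$ by composition and linear combination, so their matrices depend polynomially on $\la$. Hence $\dim Z^n_{pre}(\la,\la)=\dim C^n-\rank d^n_\la$ is upper semi-continuous, giving the cocycle inequality. The main obstacle is the cohomology group itself: the naive formula $\dim H^n=\dim Z^n-\dim B^n$ exhibits it as the difference of an upper and a lower semi-continuous function, which in general need not be semi-continuous. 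I would resolve this by the rank--nullity rewriting $\dim H^n_{pre}(\la,\la)=\dim C^n-\rank d^n_\la-\rank d^{n-1}_\la$; since each rank is lower semi-continuous, the combination $-\rank d^n_\la-\rank d^{n-1}_\la$ is upper semi-continuous, and adding the constant $\dim C^n$ preserves this property. This gives $\dim H^n_{pre}(A,A)\le\dim H^n_{pre}(B,B)$ and completes all five inequalities.
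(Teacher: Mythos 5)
Your proposal is correct, but it takes a genuinely different route from the paper, whose own ``proof'' consists only of the remark that the argument is similar to the Lie algebra case together with a citation of the Grunewald--O'Halloran lemma $\ov{G\cdot x}=G\cdot(\ov{B\cdot x})$ for a Borel subgroup $B$ of a reductive group $G$. The route the paper intends is the one it carries out explicitly for $(\al,\be,\ga)$-derivations later in the same section: use Borel's theorem on constructible orbits (underwriting the cited lemma and Remark~\ref{2.8}) to realize the degeneration as a sequential contraction $\lim_{\ep\to 0}g_\ep\kringel\la=\mu$ with $g_\ep\in GL_n(\C)$, write each of the five spaces as the kernel of a matrix whose entries are polynomial in the structure constants, and conclude from the entrywise convergence $M_\ep\to M_0$ that the rank can only drop in the limit, so kernel dimensions can only grow. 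You avoid the sequential realization entirely: you note that each dimension function is constant on orbits and upper semi-continuous for the Zariski topology (rank is lower semi-continuous because $\{\nu:\rank\ge r\}$ is a union of minor-nonvanishing loci), so that the invariant set $\{\nu: f(\nu)\ge f(\la)\}$ is closed, contains $O(\la)$, and hence contains $\mu\in\ov{O(\la)}$; your open-neighborhood formulation is this same argument. What your version buys is elementarity and robustness: no reductive-group input, no appeal to $K=\C$ or to the equivalence of Zariski and Euclidean closures, whereas the paper's route reuses machinery it needs anyway elsewhere. You also make explicit the one genuine subtlety the paper glosses over: $\dim H^n=\dim Z^n-\dim B^n$ is a difference of an upper and a lower semi-continuous function, and your rewriting $\dim H^n_{pre}(\la,\la)=\dim C^n-\rank d^n_\la-\rank d^{n-1}_\la$ is exactly the right fix; combined with the observation that the differentials (pre-Lie, or Chevalley--Eilenberg via Dzhumadil'daev's isomorphism) have entries polynomial in the $c_{ij}^k$, this cleanly settles all five inequalities.
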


The proof is similar to the proof in the Lie algebra case. A crucial lemma here is
the following, see \cite{GRH}:

\begin{lem}
Let $G$ be a complex reductive algebraic group with Borel subgroup $B$. If $G$
acts regularly on an affine variety $X$, then for all $x\in X$,
\[
\ov{G\cdot x}=G\cdot (\ov{B\cdot x}).
\]
\end{lem}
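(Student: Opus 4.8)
The plan is to prove the two inclusions separately, the whole point being to show that the right-hand side is already closed. The inclusion $G\cdot(\ov{B\cdot x})\subseteq \ov{G\cdot x}$ is immediate: since $B\subseteq G$ we have $\ov{B\cdot x}\subseteq \ov{G\cdot x}$, and the orbit closure $\ov{G\cdot x}$ is $G$-invariant, because each $g\in G$ acts as a homeomorphism, so $g\cdot\ov{G\cdot x}=\ov{g\cdot(G\cdot x)}=\ov{G\cdot x}$. Applying $G$ to both sides of the containment gives the claim. The real content is the reverse inclusion $\ov{G\cdot x}\subseteq G\cdot(\ov{B\cdot x})$.

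Set $Y=\ov{B\cdot x}$ and $Z=\ov{G\cdot x}$. First I would record the two structural facts I need: $Y$ is closed in $X$ and is $B$-invariant, since the closure of a $B$-orbit is stable under $B$ (each $b\in B$ acts as a homeomorphism, so $b\cdot Y=\ov{b\cdot(B\cdot x)}=Y$); likewise $Z$ is closed and $G$-invariant. In particular $Y\subseteq Z$, and the $G$-action restricts to a regular action on the affine variety $Z$.

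The key step uses the completeness of $G/B$. Consider the morphism $\mu\colon G\times Z\ra Z$, $\mu(g,z)=g^{-1}\cdot z$, and form the closed set $\mu^{-1}(Y)\subseteq G\times Z$. I would check that $\mu^{-1}(Y)$ is invariant under the free $B$-action $b\cdot(g,z)=(gb^{-1},z)$: indeed $\mu(gb^{-1},z)=b\cdot(g^{-1}\cdot z)$, which lies in $Y$ precisely because $Y$ is $B$-invariant. Hence $\mu^{-1}(Y)$ descends to a closed subset $W$ of the quotient $(G\times Z)/B=(G/B)\times Z$, explicitly $W=\{(gB,z):g^{-1}\cdot z\in Y\}$. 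Because $G/B$ is a complete variety, the projection $p\colon (G/B)\times Z\ra Z$ is a closed map, so $p(W)$ is closed in $Z$. Finally I would identify $p(W)=\{z\in Z:\exists\, g\in G,\ z\in g\cdot Y\}=G\cdot Y=G\cdot(\ov{B\cdot x})$. Thus $G\cdot(\ov{B\cdot x})$ is a closed subset of $Z$ containing $G\cdot x$ (as $x\in Y$), and since $Z=\ov{G\cdot x}$ is the smallest closed set containing $G\cdot x$, we obtain $Z\subseteq G\cdot(\ov{B\cdot x})$, completing the proof.

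The main obstacle is establishing that $G\cdot Y$ is closed; everything rests on transporting the completeness (properness) of $G/B$ into a closedness statement for the projection $p$ via the descent construction above. The one point requiring genuine care is the passage of $\mu^{-1}(Y)$ to a well-defined closed subset of $(G/B)\times Z$, which is exactly where the $B$-invariance of $Y=\ov{B\cdot x}$ is used; reductivity of $G$ enters only through the existence of a Borel subgroup $B$ with $G/B$ complete.
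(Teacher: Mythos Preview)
The paper does not supply its own proof of this lemma; it merely quotes the result from Grunewald and O'Halloran. Your argument is correct and is the standard one: the inclusion $G\cdot(\ov{B\cdot x})\subseteq \ov{G\cdot x}$ is immediate from $G$-invariance of $\ov{G\cdot x}$, and for the reverse inclusion you show that $G\cdot Y$ (with $Y=\ov{B\cdot x}$) is closed by descending the closed $B$-invariant set $\mu^{-1}(Y)\subseteq G\times Z$ to a closed subset of $(G/B)\times Z$ and then invoking completeness of the flag variety $G/B$ to conclude that the projection to $Z$ is a closed map. This is exactly the mechanism behind the Grunewald--O'Halloran result.

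One minor remark on your closing comment: completeness of $G/B$ holds for any connected linear algebraic group $G$, not only for reductive ones, so reductivity is in fact not needed anywhere in your argument. The hypothesis in the lemma could be weakened accordingly.
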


We have also the following easy result, which shows that pre-Lie algebra
degenerations in a sense refine the ones for Lie algebras.

\begin{lem}
If $A \ra_{\rm deg} B$ then $\Lg_A \ra_{\rm deg} \Lg_B$ for the associated Lie algebras.
\end{lem}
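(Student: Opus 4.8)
The plan is to realise the passage from a pre-Lie product to its associated Lie bracket as a $GL_n(\C)$-equivariant morphism of varieties, and then to push the orbit-closure relation forward along this morphism.

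First I would introduce the antisymmetrisation map
\[
\pi \colon \CP_n(\C) \ra \CL_n(\C), \qquad \pi(\nu)(x,y) = \nu(x,y) - \nu(y,x).
\]
This is well defined because the commutator of a pre-Lie product is a Lie bracket (the remark following the definition of a pre-Lie algebra), so $\pi(\nu)$ is genuinely a Lie algebra structure for every $\nu \in \CP_n(\C)$. Moreover $\pi$ is linear in the structure constants, hence a polynomial map, hence continuous for the Zariski topology (as well as for the Euclidean one). By construction $\pi(\la)$ and $\pi(\mu)$ represent $\Lg_A$ and $\Lg_B$ respectively.

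Next I would verify that $\pi$ is $GL_n(\C)$-equivariant, where $GL_n(\C)$ acts on $\CL_n(\C)$ by the same formula $(g \kringel \om)(x,y) = g(\om(g^{-1}x, g^{-1}y))$. Since $g$ is linear, antisymmetrisation commutes with the action:
\[
\pi(g \kringel \nu)(x,y) = g\bigl(\nu(g^{-1}x,g^{-1}y) - \nu(g^{-1}y,g^{-1}x)\bigr) = (g \kringel \pi(\nu))(x,y).
\]
In particular $\pi(O(\nu)) = O(\pi(\nu))$ for every $\nu$. Finally I would combine the two properties: by hypothesis $\mu \in \ov{O(\la)}$, so continuity of $\pi$ gives $\pi(\ov{O(\la)}) \subseteq \ov{\pi(O(\la))}$, while equivariance gives $\pi(O(\la)) = O(\pi(\la))$, whence
\[
\pi(\mu) \in \pi(\ov{O(\la)}) \subseteq \ov{O(\pi(\la))}.
\]
This says precisely that $\pi(\la)$ degenerates to $\pi(\mu)$, i.e.\ $\Lg_A \ra_{\rm deg} \Lg_B$.

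I do not expect a genuine obstacle here. The only two points requiring care are the well-definedness of $\pi$ — that the antisymmetrisation of a left-symmetric product satisfies the Jacobi identity, which is the standard pre-Lie fact already recorded in the excerpt — and the equivariance computation, which rests solely on the linearity of $g$. If one prefers the concrete formulation instead of the abstract containment $f(\ov{S})\subseteq\ov{f(S)}$, the same conclusion follows from the Corollary: a realising curve $g_t \in GL_n(\C(t))$ with $\lim_{t\to 0} g_t \kringel \la = \mu$ yields $\lim_{t\to 0} g_t \kringel \pi(\la) = \pi(\mu)$ by equivariance and continuity of $\pi$, which again gives $\Lg_A \ra_{\rm deg} \Lg_B$.
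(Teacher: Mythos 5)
Your proposal is correct, and it takes a genuinely different route from the paper. The paper first invokes its Remark~2.8 (which rests on Borel's theorem that orbits are constructible, so Zariski and Euclidean closures of orbits agree over $\C$) to replace the hypothesis $A \ra_{\rm deg} B$ by a sequential contraction $g_\ep \in GL_n(\C)$, then verifies on basis vectors that $\lim_{\ep\to 0} g_\ep([g_\ep^{-1}e_i, g_\ep^{-1}e_j]_A) = [e_i,e_j]_B$, and finally invokes Remark~2.8 again to translate this Euclidean limit statement back into $\Lg_A \ra_{\rm deg} \Lg_B$ --- in effect showing that the \emph{same} matrices $g_\ep$ realizing the pre-Lie degeneration realize the Lie algebra degeneration. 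You instead package the commutator as the antisymmetrisation map $\pi\colon \CP_n \ra \CL_n$, which is linear in structure constants hence Zariski-continuous, check equivariance, and push the closure relation through via $\pi(\ov{O(\la)}) \subseteq \ov{\pi(O(\la))} = \ov{O(\pi(\la))}$. Your argument is more intrinsic and slightly more general: it works purely with the Zariski topology over an arbitrary field, needs no passage through Euclidean limits or Borel's constructibility result, and applies verbatim to any $GL_n$-equivariant polynomial assignment of algebra structures. What the paper's computation buys in exchange is explicitness: it shows that a concrete contraction matrix witnessing $A \ra_{\rm deg} B$ immediately witnesses $\Lg_A \ra_{\rm deg} \Lg_B$, which is the form of the statement actually used alongside the explicit $g_t$ matrices in the dimension-$2$ classification. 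Your closing remark about running the same argument with a curve $g_t \in GL_n(\C(t))$ is essentially the paper's proof in disguise, so you have both routes available; the equivariance-plus-continuity version is the cleaner one and is fully rigorous as written.
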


\begin{proof}
Let $(e_1,\ldots ,e_n)$ be a basis of the underlying vector space. Denote the
product in $A$ by $e_i\cdot e_j$, in $B$ by $e_i.e_j$. The Lie bracktes are given by
$[e_i,e_j]_A=e_i\cdot e_j-e_j\cdot e_i$ and $[e_i,e_j]_B=e_i.e_j-e_j.e_i$. We have
\begin{align*}
\lim_{\ep \to 0} g_\ep ([g_\ep^{-1}(e_i), g_\ep^{-1}(e_j)]_A) & =
\lim_{\ep \to 0} g_\ep \left(g_\ep^{-1}(e_i) \cdot g_\ep^{-1}(e_j) - g_\ep^{-1}(e_j)
 \cdot g_\ep^{-1}( e_i)\right)\\
   & = \lim_{\ep \to 0} g_{\ep} \left(g_\ep^{-1}(e_i) \cdot g_\ep^{-1}(e_j)\right)-
       \lim_{\ep \to 0} g_{\ep} \left(g_\ep^{-1}(e_j) \cdot g_\ep^{-1}(e_i)\right)\\
   & = e_i.e_j-e_j.e_i \\
   & = [e_i,e_j]_B.
\end{align*}
This implies that $\Lg_A \ra_{\rm deg} \Lg_B$, because of remark $\ref{2.8}$.
\end{proof}

We can also generalize the trace invariants of Lie algebras to the case
of pre-Lie algebras. For $x,y\in A$ and $i,j\in \N$ consider the expression
\[
c_{i,j}(A)=\frac{\tr (L(x)^i)\cdot \tr (L(y)^j)}{\tr (L(x)^i L(y)^j)}.
\]
If this is well-defined for all $x,y\neq 0$ and finite, then it is an interesting
invariant of $A$. Just like in the Lie algebra case, we obtain the following result.

\begin{lem}\label{3.6}
Suppose $A$ degenerates to $B$ and both values $c_{i,j}(A)$ and $c_{i,j}(B)$ are
well-defined for all $x,y\neq 0$, then $c_{i,j}(A)=c_{i,j}(B)$.
\end{lem}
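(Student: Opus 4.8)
The plan is to argue in two stages. First I would show that the standing hypothesis forces $c_{i,j}(A)$ and $c_{i,j}(B)$ to be honest numbers, not merely functions of $x,y$. Since the product is bilinear, $L(x)$ depends linearly on $x$, so $\tr(L(x)^i)$ is homogeneous of degree $i$ in the coordinates of $x$, $\tr(L(y)^j)$ is homogeneous of degree $j$ in those of $y$, and the mixed trace $\tr(L(x)^iL(y)^j)$ is bihomogeneous of bidegree $(i,j)$. Hence $c_{i,j}$ is a rational function that is homogeneous of degree $0$ separately in $x$ and in $y$, so it descends to a rational function on $\P^{n-1}\times\P^{n-1}$. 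The assumption that it is well-defined and finite for all $x,y\neq 0$ says exactly that this function has no poles on the smooth, irreducible, complete variety $\P^{n-1}\times\P^{n-1}$; a global regular function there is constant, so $c_{i,j}(A)$ and $c_{i,j}(B)$ are genuine scalars.

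Next I would check that this scalar is constant on the whole $GL_n$-orbit. Writing $\nu=g\kringel\la$, the defining formula $(g\kringel\la)(x,y)=g(\la(g^{-1}x,g^{-1}y))$ gives $L_\nu(x)=g\,L_\la(g^{-1}x)\,g^{-1}$, and conjugation-invariance of the trace yields $\tr(L_\nu(x)^i)=\tr(L_\la(g^{-1}x)^i)$ together with the analogous identity for the mixed trace. Thus $c_{i,j}(\nu)$ evaluated at $(x,y)$ equals $c_{i,j}(\la)$ evaluated at $(g^{-1}x,g^{-1}y)$; since both are the constant from the first stage, $c_{i,j}$ takes the single value $c_{i,j}(A)$ at every point of the orbit $O(\la)$, and in particular is well-defined there. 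For the limiting step I would invoke Borel's result, which gives $\ov{O(\la)}=\ov{O(\la)}^d$; as $\mu\in\ov{O(\la)}$, there is a sequence $\nu_k=g_k\kringel\la$ with $\nu_k\to\mu$ in the Euclidean topology. Now fix one pair $(x_0,y_0)$ with $x_0,y_0\neq 0$. For these fixed arguments the entries of $L(x_0)$ and $L(y_0)$ are linear in the structure constants, so the numerator and denominator of $c_{i,j}$ at $(x_0,y_0)$ are polynomials in them and thus continuous. Since $c_{i,j}(B)$ is well-defined, $\tr(L_\mu(x_0)^iL_\mu(y_0)^j)\neq 0$, and as $\nu_k\to\mu$ the denominators converge to this nonzero value while the numerators converge to $\tr(L_\mu(x_0)^i)\tr(L_\mu(y_0)^j)$. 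Continuity of division away from zero then gives $c_{i,j}(A)=c_{i,j}(\nu_k)\to c_{i,j}(\mu)=c_{i,j}(B)$.

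The step needing the most care, and the real crux, is this last exchange of limit and quotient: $c_{i,j}$ is continuous in the structure constants only where the denominator is nonzero. This is precisely why one must test on a pair $(x_0,y_0)$ whose limiting denominator does not vanish, a choice made possible exactly by the hypothesis that $c_{i,j}(B)$ is well-defined for all nonzero arguments. The constancy established in the first stage is what reduces the comparison of the two a priori functions $c_{i,j}(A)$ and $c_{i,j}(B)$ to a comparison of two scalars read off at a single convenient pair.
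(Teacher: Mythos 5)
Your proof is correct. The paper in fact supplies no argument of its own for Lemma~\ref{3.6}: it only says the result follows ``just like in the Lie algebra case'', which means the standard argument --- conjugation-invariance of the trace makes $c_{i,j}$ constant on the $GL_n(\C)$-orbit (since $L_{g\kringel\la}(x)=gL_\la(g^{-1}x)g^{-1}$), Borel's theorem via Remark~\ref{2.8} converts membership in the Zariski orbit closure into a Euclidean sequential limit of structure constants, and continuity of the quotient at a fixed pair $(x_0,y_0)$ where the limiting denominator is nonzero transfers the value to $B$. Your second and third stages reproduce exactly this intended argument, including the one genuinely delicate point, namely that the quotient is continuous in the structure constants only where the denominator does not vanish, which is exactly what the well-definedness hypothesis on $B$ guarantees. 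Your first stage goes beyond anything the paper even sketches: the observation that bihomogeneity of bidegree $(i,j)$ in both numerator and denominator lets $c_{i,j}$ descend to a regular function on the complete irreducible variety $\P^{n-1}\times\P^{n-1}$, which must therefore be constant, is a rigorous justification of something the paper silently assumes --- that ``well-defined and finite for all $x,y\neq 0$'' forces $c_{i,j}(A)$ to be a single scalar rather than a function of $(x,y)$ (consistent with the constant values computed for $B_1(\al)$, $B_2(\be)$, $B_4$ in Section~4, but nowhere proved there). So your route is the paper's route with this implicit constancy claim actually established; the only cosmetic remark is that for the sequence $\nu_k=g_k\kringel\la$ the denominator at $(x_0,y_0)$ is automatically nonzero for every $k$ (it equals the $\la$-denominator at $(g_k^{-1}x_0,g_k^{-1}y_0)$, and $g_k$ is invertible), which you note correctly, so no passage to large $k$ is even needed before taking the limit.
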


We can generalize the definition of pre-Lie algebra derivations as follows. 

\begin{defi}
Let  $\al,\be,\ga \in \C$ and define $\Der_{(\al,\be,\ga)}(A)$ to be the space of all 
$ D\in \End (A)$ satisfying
\[
\al D(x\cdot y)=\be D(x)\cdot y+\ga x \cdot D(y)
\]
for all $x,y \in A$.
We call the elements $D\in\Der_{(\al,\be,\ga)}(A)$ also $(\al,\be,\ga)$-derivations.
\end{defi}

\begin{lem}
If $A\ra_{\rm deg}B$, then
$\dim \Der_{(\al,\be,\ga)}(A) \le \dim \Der_{(\al,\be,\ga)}(B)$ for all $\al,\be,\ga \in \C$.
\end{lem}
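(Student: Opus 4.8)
The plan is to exhibit $f(\la)=\dim\Der_{(\al,\be,\ga)}(\la)$ as an upper semi-continuous function on $\CP_n(\C)$ that is constant on $GL_n(\C)$-orbits, and then to conclude exactly as in the earlier semicontinuity lemma. First I would check that $f$ is an isomorphism invariant. If $\phi\colon A\ra A'$ is an isomorphism of pre-Lie algebras, then $D\mapsto \phi\kringel D\kringel \phi^{-1}$ is a linear bijection from $\Der_{(\al,\be,\ga)}(A)$ onto $\Der_{(\al,\be,\ga)}(A')$, because $\phi$ intertwines the two products and hence transports the defining identity $\al D(x\cdot y)=\be D(x)\cdot y+\ga x\cdot D(y)$ into the corresponding identity for $A'$. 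In particular $f(g\kringel \la)=f(\la)$ for all $g\in GL_n(\C)$, so $f$ is constant on each orbit $O(\la)$.

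Next I would realize the generalized derivation space as a kernel depending linearly on the structure constants. For a fixed structure $\la=(c_{ij}^k)$ define the linear map
\[
\Phi_\la\colon \End(V)\ra \Hom(V\otimes V,V),\qquad
(\Phi_\la D)(x,y)=\al\, D(\la(x,y))-\be\,\la(D(x),y)-\ga\,\la(x,D(y)).
\]
By definition $\Der_{(\al,\be,\ga)}(\la)=\ker\Phi_\la$, so that $f(\la)=n^2-\rank\Phi_\la$. With respect to the fixed basis $(e_1,\ldots,e_n)$, the entries of the matrix representing $\Phi_\la$ are polynomials (in fact linear forms) in the $c_{ij}^k$ and in $\al,\be,\ga$.

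Finally I would invoke semicontinuity of the rank. The locus where $\rank\Phi_\la\ge r$ is defined by the non-vanishing of some $r\times r$ minor of this matrix, hence is open in $\CP_n(\C)$; thus $\la\mapsto\rank\Phi_\la$ is lower semi-continuous and $f(\la)=n^2-\rank\Phi_\la$ is upper semi-continuous. Consequently the set $\{\nu\in\CP_n(\C)\mid f(\nu)\ge f(\la)\}$ is closed, and since it contains the orbit $O(\la)$ on which $f$ equals $f(\la)$, it contains $\ov{O(\la)}$. If $A\ra_{\rm deg}B$ with $A,B$ represented by $\la,\mu$, then $\mu\in\ov{O(\la)}$, whence $f(\mu)\ge f(\la)$, which is the asserted inequality $\dim\Der_{(\al,\be,\ga)}(A)\le\dim\Der_{(\al,\be,\ga)}(B)$.

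The only step requiring genuine care is the middle one: verifying that $\Phi_\la$ genuinely depends polynomially on $\la$ and checking the direction of the inequality, so that the closed condition $f\ge f(\la)$ (rather than $f\le f(\la)$) is the one propagated into the orbit closure. Once the identification $\Der_{(\al,\be,\ga)}(\la)=\ker\Phi_\la$ with a polynomially-varying matrix is in place, the argument is formal and identical in spirit to the proof for $\dim\CL$, $\dim\CR$ and $\dim\CZ$.
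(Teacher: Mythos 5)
Your proof is correct, and its core computation is the same as the paper's: both identify $\Der_{(\al,\be,\ga)}$ with the kernel of a matrix whose entries are linear in the structure constants (your $\Phi_\la$, the paper's $n^3\times n^2$ matrix $M$ obtained from the equations $\sum_l(\al c_{ij}^l d_{kl}-\be c_{lj}^k d_{li}-\ga c_{il}^k d_{lj})=0$), and both conclude by semicontinuity of rank read off from minors. Where you genuinely diverge is in how the semicontinuity reaches the orbit closure. The paper realizes the degeneration as a sequential contraction $\lim_{\ep\to 0}g_\ep\kringel\la=\mu$ (Remark \ref{2.8}, which rests on Borel's constructibility theorem identifying the Zariski and Euclidean closures of orbits), obtains a convergent family $M_\ep\to M_0$, and passes the vanishing of all $(m+1)\times(m+1)$ minors to the limit. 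You argue purely in the Zariski topology: $\{\rank\Phi\ge r\}$ is open since it is a union of non-vanishing loci of minors, so $\{f\ge f(\la)\}$ is closed and, containing $O(\la)$, contains $\ov{O(\la)}$. Your route buys two things. First, it bypasses the sequential-contraction realization entirely, so it needs no appeal to Borel and would work verbatim over an arbitrary field, whereas the paper's limit argument is tied to $\C$ with its Euclidean topology. Second, you make explicit the orbit-invariance of $f$ via the conjugation $D\mapsto\phi\kringel D\kringel\phi^{-1}$; the paper's proof actually uses this implicitly, since its claim that every $(m+1)\times(m+1)$ minor of $M_\ep$ vanishes for all $\ep$ requires knowing $\rank M_\ep\le m$ along the whole orbit, which is exactly the invariance you verify. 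The paper's version, in exchange, stays closer to the contraction picture used throughout the rest of the article.
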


\begin{proof}
Let $\la,\mu\in \CP_n(\C)$ represent $A$ and $B$. Fix a basis $(e_1,\ldots ,e_n)$ of the
underlying vector space. Then $\lim_{\ep\to 0}(g_{\ep}\kringel \la)(e_i,e_j)
=\mu(e_i,e_j)$ for operators $g_\ep\in GL_n(\C)$. 
For $D \in \Der_{\alpha , \beta, \gamma} (A)$ we write $D=(d_{ij})_{1 \le i,j \le n}$,
and $D(e_i) = \sum_{l=1}^n d_{li} e_l$.
We have $e_i\cdot e_j = \sum_{k=1}^n c_{ij}^k e_k$ in $A$, with the structure constants
$c_{ij}^k$. Since $D$ is an $(\al,\be,\ga)$-derivation we have
\[ 
\sum_{l=1}^n (\al c_{ij}^l d_{kl} - \be c_{lj}^k d_{li} - \ga c_{il}^k d_{lj}) = 0 
\] 
for all $i,j,k$. We can rewrite these $n^3$ equations as a matrix equation
$Md=0$ where $d$ is the vector formed by the columns of the matrix $D=(d_{ij})$, and $M$
is a $n^3\times n^2$ matrix depending on $c_{ij}^k$ and $\al,\be,\ga$.
Thus we have $\ker(M)=\Der_{(\al,\be,\ga)}(A)$.
If $A$ degenerates to $B$ via $g_{\ep}$ we obtain a sequence of matrices $M_{\ep}$
with $\lim_{\ep\to 0}M_\ep=M_0$ by componentwise convergence of the structure constants,
where $\ker (M_0)=\Der_{(\al,\be,\ga)}(B)$.
Let $m$ be the rank of the matrix $M$. Then every submatrix of size $(m+1)\times (m+1)$
has zero determinant. Because of the convergence the same is true for $M_0$.
It follows that $\rank(M)\ge \rank(M_0)$, or $\dim \ker(M)\le \dim \ker(M_0)$.
\end{proof}

\section{Degenerations in dimension 2}

In this section, we determine the Hasse diagram of degenerations for $2$-dimensional
pre-Lie algebras. This is already quite complicated, and we can apply our results in a 
non-trivial way.
In dimension $n=2$ there are two different complex Lie algebras. Let $(e_1,e_2)$ be a basis.
Then either $\Lg=\C^2$, or $\Lg=\Lr_2(\C)$, where we can choose $[e_1,e_2]=e_1$. 
The classification of $2$-dimensional complex pre-Lie algebras is well known, see
for example \cite{BU1}:
\vspace*{0.5cm}
\begin{center}
\begin{tabular}{c|c|c|c}
$A$ & Products & $\Lg_A$ & $\dim \Der (A)$\\
\hline
$A_1$ & $-$ & $\C^2$  & $4$ \\
\hline
$A_2$ & $e_1\cdot e_1=e_1$ & $\C^2$ & $1$ \\
\hline
$A_3$ & $e_1\cdot e_1=e_1,\; e_2\cdot e_2=e_2$ & $\C^2$ & $0$ \\
\hline
$A_4$ & $e_1\cdot e_2=e_1,\;e_2\cdot e_1=e_1$, & $\C^2$ & $1$ \\
      & $e_2\cdot e_2=e_2$ &  &  \\
\hline
$A_5$ & $e_2\cdot e_2=e_1$ & $\C^2$ & $2$ \\
\hline
$B_1(\al)$ & $e_2\cdot e_1=-e_1,\; e_2\cdot e_2=\al e_2$ & $\Lr_2(\C)$ & $1$ if $\al\neq -1$ \\
           & & & $2$ if $\al=-1$ \\
\hline
$B_2(\be)$ & $e_1\cdot e_2=\be e_1,\; e_2\cdot e_1=(\be-1)e_1$, & $\Lr_2(\C)$ 
& $1$ if $\be\neq 1$ \\
$\be \neq 0$ & $e_2\cdot e_2=\be e_2$ & & $2$ if $\be=1$ \\
\hline
$B_3$  & $e_2\cdot e_1=-e_1,\; e_2\cdot e_2=e_1-e_2$ & $\Lr_2(\C)$ & $1$ \\
\hline
$B_4$  & $e_1\cdot e_1=e_2,\; e_2\cdot e_1=-e_1$ & $\Lr_2(\C)$ & $0$ \\
       & $e_2\cdot e_2=-2e_2$ & & \\
\hline
$B_5$  & $e_1\cdot e_2=e_1,\; e_2\cdot e_2=e_1+e_2$ & $\Lr_2(\C)$ & $1$ \\
\end{tabular}
\end{center}
\vspace*{0.5cm}
Here we have $B_2(0)\simeq B_1(0)$, so that we require $\be \neq 0$.
The pre-Lie algebras $A_1,\ldots ,A_5$ are commutative and associative,
since their Lie algebra  $\Lg=\C^2$ is abelian. The following algebras are Novikov
algebras:
\[
A_1,\, A_2,\, A_3,\, A_4,\, A_5,\, B_2(\be)_{\be \in \C}, \, B_5.
\]
From the list of non-commutative pre-Lie algebras, $B_1(-1)$ and $B_2(1)$ are associative, and 
$B_4$ is simple. \\[0.2cm]
For a pre-Lie algebra $A$ we consider the quantities $c_{i,j}(A)$
for $i,j\in \N$. We have $c_{i,j}(A_2)= 1$, $c_{i,j}(A_4) = 2$, 
$c_{i,j}(B_3) = 2$, $c_{i,j}(B_5) = 1$ for all $i,j\ge 1$, and \\[0.2cm]
\begin{align*}
c_{i,j}(B_1(\al)) & = \frac{(\al^i+(-1)^i)(\al^j+(-1)^j)}{\al^{i+j}+(-1)^{i+j}},\\[0.2cm]
c_{i,j}(B_2(\be)) & = \frac{(\be^i+(\be-1)^i)(\be^j+(\be-1)^j)}{\be^{i+j}+(\be-1)^{i+j}},\\[0.2cm]
c_{i,j}(B_4) & =\frac{(2^i+1)(2^j+1)}{2^{i+j}+1}.\\[0.2cm]
\end{align*}

If $\la \ra_{\rm deg} \mu$ properly, then $\dim O(\la)> \dim O(\mu)$. Therefore, to determine 
the degenerations, we can order the algebras by orbit dimension, i.e., by
$\dim \Der (\la)$, as follows:
\[
A_3,\,B_4,\quad A_2,\,A_4,\,B_1(\al)_{\al\neq -1},\,B_2(\be)_{\neq 0,1},\, B_3,\,B_5,\quad
A_5,\, B_1(-1),\,B_2(1),\quad A_1.
\]

\begin{lem}
The orbit closure of $A_3$ in $\CP_2(\C)$ contains exactly the following algebras:
\[
A_3,\,A_2,\,A_4,\,A_5,\,A_1.
\]
In other words, $A_3$ can only properly degenerate to $A_2,A_4,A_5,A_1$.
\end{lem}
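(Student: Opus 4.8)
The plan is to establish the two inclusions separately: first that $\ov{O(A_3)}$ contains \emph{nothing beyond} the orbits of $A_1,\dots ,A_5$ (the upper bound), and then that it \emph{does} contain each of $A_2,A_4,A_5,A_1$ (the lower bound); the presence of $A_3$ itself is automatic, since every orbit lies in its own closure. For the upper bound I would exploit the strongest invariant at hand, namely commutativity. The algebra $A_3$ is commutative, so the operator $T(x)=L(x)-R(x)$ vanishes identically on $A_3$; as this is a polynomial operator identity, Lemma \ref{3.1} forces every algebra in $\ov{O(A_3)}$ to be commutative as well. Among the isomorphism types in the classification table exactly $A_1,\dots ,A_5$ are commutative, while each member of the families $B_1(\al),\dots ,B_5$ has $e_1\cdot e_2\neq e_2\cdot e_1$ (one reads the nonzero commutator $\pm e_1$ off the product table in every case). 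Hence $\ov{O(A_3)}$ can meet only the orbits of $A_1,\dots ,A_5$. One could instead note that $\dim O(A_3)=4$ is maximal, so by Borel's result every proper degeneration lands in a strictly smaller orbit, excluding the only other $4$-dimensional orbit $B_4$; but the commutativity argument is cleaner and eliminates all of the $B_i$ at once.

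For the lower bound I would realize each target as an explicit limit $\lim_{t\to0}g_t\kringel A_3$, which amounts to choosing a $t$-dependent basis $v_i(t)=g_t^{-1}(e_i)$ in which the structure constants of $A_3$ converge to those of the target. The degeneration $A_3\ra_{\rm deg}A_1$ is the universal one (scaling by $t^{-1}E_2$, as in the example that every pre-Lie algebra degenerates to the zero algebra). For $A_3\ra_{\rm deg}A_2$ I would take $v_1=e_1$ and $v_2=t\,e_2$, so that $v_1\cdot v_1=v_1$ while $v_2\cdot v_2=t\,v_2\to0$ and $v_1\cdot v_2=0$. For $A_3\ra_{\rm deg}A_4$ I would pass to the idempotent-sum/difference basis $v_1=t(e_1-e_2)$, $v_2=e_1+e_2$; then $v_2\cdot v_2=v_2$, $v_1\cdot v_2=v_2\cdot v_1=v_1$, and $v_1\cdot v_1=t^2v_2\to0$, which is exactly $A_4$ in the limit. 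The relevant base-change determinants are $t$ and $2t$, so these are genuine elements of $GL_2(\C(t))$ and the $\C(t)$-limit criterion applies.

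The one step requiring real ingenuity, and the one I expect to be the main obstacle, is $A_3\ra_{\rm deg}A_5$: here $A_3\cong\C\times\C$ contains no nonzero nilpotents, whereas $A_5$ is nilpotent, so a naive diagonal scaling cannot work. The device is to rotate the square of a basis vector into a transverse direction. With $v_2=s(e_1-e_2)$ and $v_1=s^2(e_1+e_2)$ one computes $v_2\cdot v_2=s^2(e_1+e_2)=v_1$ exactly, while $v_1\cdot v_1=s^2v_1\to0$ and $v_1\cdot v_2=s^2v_2\to0$, so the limit $s\to0$ is precisely $A_5$; the determinant is $-2s^3\neq0$, so this too lies in $GL_2(\C(t))$. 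Checking that each of the four families reproduces the target product table is then a routine computation. Combining the two inclusions gives $\ov{O(A_3)}=O(A_3)\cup O(A_2)\cup O(A_4)\cup O(A_5)\cup O(A_1)$, which is the assertion.
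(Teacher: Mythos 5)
Your proposal is correct and follows essentially the same route as the paper: the commutativity identity $T(x)=L(x)-R(x)=0$ together with Lemma \ref{3.1} excludes all the non-commutative algebras $B_1(\al),\dots ,B_5$, and explicit elements of $GL_2(\C(t))$ realize the degenerations $A_3\ra_{\rm deg}A_2,A_4,A_5$ (with $A_3\ra_{\rm deg}A_1$ being the trivial scaling). Your base-change matrices differ from the paper's witnesses (e.g.\ $v_1=s^2(e_1+e_2)$, $v_2=s(e_1-e_2)$ versus the paper's $g_t^{-1}=\left(\begin{smallmatrix} 2t^2 & 2t \\ 0 & 1 \end{smallmatrix}\right)$ for $A_3\ra_{\rm deg}A_5$), but they check out, so this is only a cosmetic difference.
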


\begin{proof}
First of all, $A_3$ can only degenerate to commutative algebras, see lemma $\ref{3.1}$.
The orbit dimension of $A_3$ is equal to $4$. Hence $A_3$ can only properly degenerate
to commutative algebras of lower orbit dimension, which are exactly the above algebras.
For these we find the following degenerations:
We have $A_3\ra_{\rm deg} A_2$ via 
$g_t^{-1}=\left(\begin{smallmatrix} 1 & 0 \\ t^2 & t \end{smallmatrix}\right)$.
Also, we have a degeneration $A_3\ra_{\rm deg} A_4$ via
$g_t^{-1}=\left(\begin{smallmatrix} 1 & 0 \\ 1 & t \end{smallmatrix}\right)$.
Finally, $A_3\ra_{\rm deg} A_5$ by
$g_t^{-1}=\left(\begin{smallmatrix} 2t^2 & 2t \\ 0 & 1 \end{smallmatrix}\right)$.
\end{proof}

\begin{lem}
The orbit closure of $B_4$ in $\CP_2(\C)$ contains exactly the following algebras:
\[
B_4,\; B_1(-2),\; B_2(-1),\;A_5,\; A_1.
\]
\end{lem}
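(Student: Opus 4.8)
The plan is to split the proof into a constructive part (exhibiting the four proper degenerations) and an exclusion part (ruling out every other algebra). Since $\dim\Der(B_4)=0$, the orbit $O(B_4)$ has the maximal dimension $4$, so by the corollary on proper degenerations any proper target has strictly larger $\dim\Der$, i.e.\ $\dim\Der\ge 1$. This already discards $A_3$ (orbit dimension $4$, not isomorphic to $B_4$) and reduces the problem to the finite list $A_1,A_2,A_4,A_5,B_1(\al),B_1(-1),B_2(\be),B_2(1),B_3,B_5$.

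For the constructions I would verify, via the corollary on $\C(t)$-limits, the degenerations $B_4\ra_{\rm deg}B_1(-2)$ through $g_t^{-1}=\left(\begin{smallmatrix} t & 0\\ 0 & 1\end{smallmatrix}\right)$, $B_4\ra_{\rm deg}B_2(-1)$ through $g_t^{-1}=\left(\begin{smallmatrix} t & 1\\ 0 & 1\end{smallmatrix}\right)$, and $B_4\ra_{\rm deg}A_5$ through $g_t^{-1}=\left(\begin{smallmatrix} 0 & t\\ t^2 & 0\end{smallmatrix}\right)$; the degeneration $B_4\ra_{\rm deg}A_1$ to the zero product is automatic. Each is a one-line evaluation of $\lim_{t\to 0}(g_t\kringel B_4)(e_i,e_j)$ on the four basis pairs.

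The exclusion of the rest has an easy half and a genuinely delicate half. For the easy half I would apply Lemma \ref{3.6} with the value $c_{1,1}(B_4)=\tfrac{9}{5}$: since $c_{1,1}=1$ for $A_2,B_5,B_2(1)$ and $c_{1,1}=2$ for $A_4,B_3,B_1(-1)$, all of these are excluded, and comparing $c_{i,j}(B_1(\al))$ and $c_{i,j}(B_2(\be))$ with $c_{i,j}(B_4)$ forces the eigenvalues of $L(e_2)$ to be in ratio $2:1$, i.e.\ $\al\in\{-2,-\tfrac12\}$ and $\be\in\{-1,2\}$. Note that the Lie-algebra criterion and commutativity give nothing here: $\Lr_2$ degenerates to both $2$-dimensional Lie algebras and $B_4$ is non-commutative; also $c_{i,j}$ is not defined on $A_5$, whose left multiplications are nilpotent, so $A_5$ is correctly left in the list.

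The hard part is that $c_{i,j}$ cannot distinguish $B_1(-2)$ from $B_1(-\tfrac12)$, nor $B_2(-1)$ from $B_2(2)$: the reciprocal parameters produce the same unordered eigenvalue ratio and hence the identical value of $c_{i,j}$ for all $i,j$. To break this symmetry I would pass to a mixed left--right trace identity, which detects the ordering of the eigenvalues. A direct computation gives, for every $x\in B_4$,
\[
5\,\tr(L(x))\,\tr(R(x))=6\,\tr(L(x)^2),
\]
and by the argument of Lemma \ref{3.1} (for fixed $x$ both sides are $GL_n$-invariant polynomials in the structure constants, so an identity holding on $O(B_4)$ holds on $\ov{O(B_4)}$) this must hold for every algebra in $\ov{O(B_4)}$. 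Evaluating the two sides on $B_1(\al)$ yields $5\al(\al-1)$ against $6(1+\al^2)$, equal for $\al=-2$ but not for $\al=-\tfrac12$, and on $B_2(\be)$ yields $10\be(2\be-1)$ against $6((\be-1)^2+\be^2)$, equal for $\be=-1$ but not for $\be=2$. This removes $B_1(-\tfrac12)$ and $B_2(2)$ and leaves exactly $B_4,B_1(-2),B_2(-1),A_5,A_1$; the identity does hold for the four genuine degenerations, which is a useful consistency check. This final use of a combined $L$--$R$ invariant, rather than a purely $L$-based one, is the crux of the argument.
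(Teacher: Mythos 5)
Your proposal is correct and follows the paper's skeleton — orbit-dimension count to list the candidates, the invariants $c_{i,j}$ with $c_{1,1}(B_4)=\tfrac{9}{5}$ to eliminate $A_2,A_4,B_3,B_5,B_1(-1),B_2(1)$ and cut the families down to $\al\in\{-2,-\tfrac12\}$ and $\be\in\{-1,2\}$, plus explicit matrices for the positive degenerations — but it genuinely diverges at the step you correctly identify as the crux. The paper breaks the tie with a two-variable \emph{operator} identity: it exhibits the family $T_{r,s}(x,y)=0$ on $B_4$ and specializes to $[2L(x)-R(x),\,2L(y)-R(y)]=0$, which holds on $B_1(\al)$ iff $\al=-2$ and on $B_2(\be)$ iff $\be=-1$, so Lemma \ref{3.1} applies verbatim. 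You instead use the one-variable scalar trace identity $5\,\tr(L(x))\tr(R(x))=6\,\tr(L(x)^2)$, and your numbers check out: both sides equal $30x_2^2$ on $B_4$, while on $B_1(\al)$ the identity is $5\al(\al-1)=6(1+\al^2)$, i.e.\ $(\al+2)(\al+3)=0$, and on $B_2(\be)$ it is $10\be(2\be-1)=6(\be^2+(\be-1)^2)$, i.e.\ $(\be+1)(4\be-3)=0$; intersected with the $c_{i,j}$-constraint this kills exactly $B_1(-\tfrac12)$ and $B_2(2)$ (note your identity alone would not suffice, since it also admits $\al=-3$ and $\be=\tfrac34$, so the combination with $c_{1,1}$ is essential — the paper's operator identity pins down $\al=-2$, $\be=-1$ in one shot). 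One caveat: your identity is not literally an instance of Lemma \ref{3.1}, which concerns polynomials vanishing as operators, and your justification is phrased loosely — for fixed $x$, $\tr(L(x))$ is \emph{not} $GL_n$-invariant; what is true is that $L_{g\kringel\mu}(x)=gL_\mu(g^{-1}x)g^{-1}$, so validity of the identity for \emph{all} $x$ is orbit-stable, and it is Zariski-closed because it amounts to the vanishing of the coefficients (polynomials in the $c_{ij}^k$) of a polynomial in the coordinates of $x$. With that repair the argument is complete, and it even buys something the paper's route lacks: a one-variable, purely polynomial criterion with no genericity caveat about vanishing denominators of the kind implicit in Lemma \ref{3.6}. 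Finally, your degeneration matrices differ from the paper's, which uses $g_t^{-1}=\left(\begin{smallmatrix} t & 0\\ t & 1\end{smallmatrix}\right)$ for $B_1(-2)$, $\left(\begin{smallmatrix} \frac12 & 0\\ -\frac12 & t\end{smallmatrix}\right)$ for $B_2(-1)$ and $\left(\begin{smallmatrix} 2t & 0\\ t & 3t^2\end{smallmatrix}\right)$ for $A_5$, but I verified all three of yours on the four basis products and they realize the claimed limits.
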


\begin{proof}
The orbit dimension of $B_4$ equals $4$. Hence $B_4$ can only properly degenerate
to algebras of lower orbit dimension, which are the following ones:
\[
A_2,\;A_4,\;B_1(\al),\;B_2(\be),\;B_3,\;B_5,\;A_5,\; A_1.
\]
Now $B_4$ cannot degenerate to $A_2$ since
\[
c_{1,1}(B_4)=\frac{9}{5}\neq 1=c_{1,1}(A_2).
\]
In the same way, $B_4$ cannot degenerate to $A_4$. \\
Assume that $B_4\ra_{\rm deg}B_1(\al)$. Comparing the invariants $c_{i,j}$ 
for the algebras $B_4$ and $B_1(\al)$ yields that we must have 
$(\al+2)(2\al+1)=0$. For these two values of $\al$ all invariants $c_{i,j}$ coincide, so that 
we cannot exclude that $B_4$ possibly degenerates to $B_1(-2)$, $B_1(-1/2)$.
In fact, there is a degeneration 
$B_4\ra_{\rm deg}  B_1(-2)$ by
$g_t^{-1}=\left(\begin{smallmatrix} t & 0 \\ t & 1 \end{smallmatrix}\right)$.
But there is no degeneration of $B_4$ to any other algebra $B_1(\al)$ for $\al\neq -2$. 
To see this we use lemma $\ref{3.1}$.  
If  $x=x_1e_1+x_2e_2$, $y=y_1e_1+y_2e_2$ then the left and right multiplications of $B_4$
are given by
\begin{align*}
L(x)= & \begin{pmatrix}
-x_2 & 0 \\ x_1 & -2x_1 
\end{pmatrix}, \quad
R(x)=
\begin{pmatrix}
0 & -x_1 \\ x_1 & -2x_1 
\end{pmatrix}, \\
L(y)= & \begin{pmatrix}
-y_2 & 0 \\ y_1 & -2y_1 
\end{pmatrix}, \quad
R(y)=
\begin{pmatrix}
0 & -y_1 \\ y_1 & -2y_1 
\end{pmatrix}. 
\end{align*}
Searching for quadratic operator identities $T(x,y)=0$ for all
$x,y\in B_4$, we find that $T_{r,s}(x,y)=0$ for all $r,s\in \C$, where  \\[0.1cm]
\begin{align*}
T_{r,s}(x,y) & = r (L(x)R(y)-L(y)R(x))+s (R(x)L(y)-R(y)L(x)) \\
       & + (s-3r)[L(x),L(y)] +\frac{1}{2}(r-2s) [R(x),R(y)]. \\
\end{align*}
For $r=s=-2$ we obtain
\[
T(x,y)=[2L(x)-R(x),2L(y)-R(y)]=0. \\
\]
But for $B_1(\al)$ the left and right multiplication operators satisfy this identity if and 
only if $\al=-2$, since in this case
\[
T(x,y)=
\begin{pmatrix}
0 & (\al+2)(x_2y_1-x_1y_2) \\ 0 & 0
\end{pmatrix}.
\]
Hence only $B_4\ra_{\rm deg} B_1(-2)$ is possible.\\
Next assume that $B_4$ degenerates to $B_2(\be)$. A calculation shows that $B_2(\be)$
satisfies the above operator identity if and only if $\be=-1$. We have a degeneration
$B_4 \ra_{\rm deg} B_2(-1)$ however by
$g_t^{-1}=\left(\begin{smallmatrix} \frac{1}{2} & 0 \\ -\frac{1}{2} & t 
\end{smallmatrix}\right)$. \\
The algebra $B_4$ cannot degenerate to $B_3$ since $c_{1,1}(B_4)=9/5\neq 2=c_{1,1}(B_3)$,
and also not to $B_5$ since $c_{1,1}(B_5)=1$. Finally $B_4\ra_{\rm deg} A_5$ by
$g_t^{-1}=\left(\begin{smallmatrix} 2t & 0 \\ t & 3t^2 \end{smallmatrix}\right)$.
\end{proof}

The classification of degenerations among $2$-dimensional pre-Lie algebras is as follows.
We restrict ourselfs to proper degenerations, so that we do not list the algebra itself
in the orbit closure.

\begin{thm}
The orbit closures in $\CP_2(\C)$ are given as follows:
\vspace*{0.5cm}
\begin{center}
\begin{tabular}{c|c}
$A$ & $\ov{O(A)}$ \\
\hline
$A_3$ & $A_2,\,A_4,\,A_5,\,A_1$    \\
$B_4$ & $B_1(-2),\,B_2(-1),\,A_5,\,A_1$   \\
$A_2$ & $A_5,\,A_1$    \\
$A_4$ & $A_5,\,A_1$    \\
$B_1(\al)_{\al\neq -1}$ & $A_5,\,A_1$  \\
$B_2(\be)_{\be \neq 1}$ & $A_5,\,A_1$  \\
$B_3$ & $A_5,\,B_1(-1),\,A_1$ \\
$B_5$ & $A_5,\,B_2(1),\,A_1$ \\
$A_5$ & $A_1$ \\
$B_1(-1)$ & $A_1$ \\
$B_2(1)$ & $A_1$ \\
\end{tabular}
\end{center}
\vspace*{0.5cm}
\end{thm}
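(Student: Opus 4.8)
The plan is to reduce everything to the two orbit closures already computed in the two preceding lemmas (those for $A_3$ and $B_4$), together with three structural constraints, processing the remaining algebras in order of decreasing orbit dimension. The constraints are: (i) a proper degeneration strictly lowers the orbit dimension, so I only ever look downward in the ordering by $\dim\Der$ listed above; (ii) the associated-Lie-algebra lemma, $A\ra_{\rm deg}B\Rightarrow\Lg_A\ra_{\rm deg}\Lg_B$, combined with the fact that in dimension two $\C^2$ degenerates only to $\C^2$ whereas $\Lr_2(\C)$ degenerates to both, so that the $A$-algebras (whose Lie algebra is $\C^2$) can reach only $A$-algebras, while the $B$-algebras may reach either family; and (iii) the upper semicontinuous invariants $\dim\CL,\dim\CR$ and the trace invariants $c_{i,j}$. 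For each algebra I first bound $\ov{O(\cdot)}$ from above using (i)--(iii), and then realize every surviving target by an explicit one-parameter family $g_t\in GL_2(\C(t))$, whose existence certifies the degeneration by the corollary to Borel's theorem.

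I begin at the bottom. Since the orbit dimensions occurring are only $0,2,3,4$, with no orbit of dimension one, and $A_1$ is the unique orbit of dimension zero, each of the three algebras $A_5,B_1(-1),B_2(1)$ of orbit dimension two can degenerate properly only to $A_1$, and that degeneration is the universal one sending any structure to the zero product. Next I assemble the separating invariants. A direct computation of annihilators gives the profiles $(\dim\CL,\dim\CR)=(1,0)$ for $B_1(-1)$ and for $B_3$, $(0,1)$ for $B_2(1)$ and for $B_5$, $(1,0)$ for $B_1(\al)$ with $\al\neq0$, and $(0,0)$ for $B_2(\be)$; combined with the tabulated values $c_{i,j}(B_1(-1))=c_{i,j}(B_3)=2$ and $c_{i,j}(B_2(1))=c_{i,j}(B_5)=1$, these suffice for all exclusions.

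The orbit-dimension-three algebras are then handled case by case. For $A_2$ and $A_4$ constraint (ii) restricts the targets to $A$-algebras of smaller orbit dimension, namely $A_5$ and $A_1$, so it only remains to exhibit $A_2\ra_{\rm deg}A_5$ and $A_4\ra_{\rm deg}A_5$. For $B_1(\al)$, $\al\neq-1$, the a priori targets are $A_5,B_1(-1),B_2(1),A_1$: the value $c_{1,1}(B_1(\al))=(\al-1)^2/(\al^2+1)$ equals the constant $2$ only at $\al=-1$, so Lemma \ref{3.6} excludes $B_1(-1)$ (the finitely many $\al$ with $\al^2+1=0$ being excluded instead by another index $c_{i,j}$), while $\dim\CL(B_1(\al))=1>0=\dim\CL(B_2(1))$ excludes $B_2(1)$ by upper semicontinuity. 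For $B_2(\be)$, $\be\neq0,1$, the value $c_{1,1}(B_2(\be))=(2\be-1)^2/(2\be^2-2\be+1)$ never equals $2$ and equals $1$ only at $\be\in\{0,1\}$, so Lemma \ref{3.6} excludes both associative limits; in each case the surviving targets are $A_5,A_1$, and one exhibits $\ra_{\rm deg}A_5$. Finally $B_3$ has annihilator profile $(1,0)$, so upper semicontinuity of $\dim\CL$ forbids the target $B_2(1)$ of profile $(0,1)$, leaving $A_5,B_1(-1),A_1$; symmetrically $B_5$ has profile $(0,1)$, so $\dim\CR$ forbids $B_1(-1)$, leaving $A_5,B_2(1),A_1$. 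Constructing $B_3\ra_{\rm deg}B_1(-1)$ and $B_5\ra_{\rm deg}B_2(1)$ then completes these two rows.

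Thus the exclusion half of the theorem is essentially forced by the invariants; the real labor, and the main obstacle, is the existence half, since every positive degeneration must be witnessed by an explicit $g_t\in GL_2(\C(t))$. This is genuinely unavoidable exactly for the targets invisible to the trace invariants --- above all $A_5$, whose left multiplications are nilpotent, so that $c_{i,j}(A_5)$ is the indeterminate form $0/0$ and no semicontinuous invariant can place $A_5$ in a given orbit closure --- and for the two associative limits $B_1(-1),B_2(1)$. I expect to produce these families by taking $g_t^{-1}$ triangular with monomial entries $t^{k}$ and solving for the exponents so that the transported structure constants $(g_t\kringel\la)(e_i,e_j)$ converge as $t\to0$ to those of the target; matching the nilpotent relation $e_2\cdot e_2=e_1$ of $A_5$ from each source, and matching $B_1(-1)$ from $B_3$ and $B_2(1)$ from $B_5$, is where the bookkeeping will be most delicate.
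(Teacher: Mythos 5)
Your skeleton coincides with the paper's proof: both rest on the two preceding lemmas computing $\ov{O(A_3)}$ and $\ov{O(B_4)}$, then treat the orbit-dimension-$3$ algebras case by case, excluding targets via semicontinuous invariants and certifying the survivors by explicit $g_t\in GL_2(\C(t))$, and both settle $A_5$, $B_1(-1)$, $B_2(1)$ by noting there are no orbits of dimension one, so only $A_1$ remains below them. Two of your choices differ in detail, both legitimately. Where the paper confines $A_2,A_4$ to commutative targets via Lemma \ref{3.1}, you use the associated-Lie-algebra lemma together with the closedness of the orbit of the abelian $\C^2$ --- same conclusion. And where the paper excludes $B_1(\al)\ra_{\rm deg}B_2(1)$, $B_3\ra_{\rm deg}B_2(1)$, $B_5\ra_{\rm deg}B_1(-1)$ by the trace invariants $c_{1,1}$, you use the annihilator profiles $(\dim\CL,\dim\CR)$. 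Your variant is actually more robust at a point the paper glosses over: at $\al=0$ all trace invariants of $B_1(0)$ and $B_2(1)$ coincide ($c_{i,j}=1$ for all $i,j$), so the paper's ``similarly'' does not literally apply there, whereas $\dim\CL(B_1(\al))=1>0=\dim\CL(B_2(1))$ works uniformly in $\al$. Your parenthetical treatment of the exceptional values $\al^2+1=0$ by a higher index (e.g.\ $c_{2,2}=0\neq 2$) is likewise correct and matches what the paper leaves implicit.

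The one genuine incompleteness is the existence half: you never exhibit the eight witnesses ($A_2,A_4,B_1(\al)_{\al\neq-1},B_2(\be)_{\be\neq 1},B_3,B_5\ra_{\rm deg}A_5$; $B_3\ra_{\rm deg}B_1(-1)$; $B_5\ra_{\rm deg}B_2(1)$), only a strategy for finding them. These are indispensable, since the theorem asserts membership in the closures and, as you correctly observe for the nilpotent target $A_5$, no semicontinuous invariant can supply that. Your triangular-monomial ansatz does succeed --- the paper's matrices confirm it, e.g.\ $g_t^{-1}=\left(\begin{smallmatrix} t & 0 \\ 1 & -t \end{smallmatrix}\right)$ for $A_2\ra_{\rm deg}A_5$, $g_t^{-1}=\left(\begin{smallmatrix} 1 & 0 \\ t & t^2(\al+1) \end{smallmatrix}\right)$ for $B_1(\al)\ra_{\rm deg}A_5$ (note the parameter-dependent coefficient, slightly beyond pure monomials), and the diagonal families $g_t^{-1}=\diag(-t,1)$ for $B_3\ra_{\rm deg}B_1(-1)$ and $g_t^{-1}=\diag(t^{-1},1)$ for $B_5\ra_{\rm deg}B_2(1)$ --- but until such matrices are written down and checked, your argument establishes only the upper bounds on the orbit closures, i.e.\ half of each row of the table.
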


\begin{proof}
The classification of the orbit closures for $A_3$ and $B_4$ is given in the two lemmas
above. $A_2$ can only degenerate to commutative algebras of orbit dimension smaller than
$3$, that is to $A_5$ and $A_1$. Both is possible, we have 
$A_2 \ra_{\rm deg} A_5$ by
$g_t^{-1}=\left(\begin{smallmatrix} t & 0 \\ 1 & -t 
\end{smallmatrix}\right)$.
For $A_4$ the same reasoning applies and we have
$A_4 \ra_{\rm deg} A_5$ by
$g_t^{-1}=\left(\begin{smallmatrix} t & 0 \\ 1 & t 
\end{smallmatrix}\right)$. \\
The orbit dimension for $B_1(\al)$, $\al\neq -1$ is $3$, hence possible algebras
in the orbit closure are $A_5$, $B_1(-1)$ and $B_2(1)$. We have a degeneration to $A_5$
by $g_t^{-1}=\left(\begin{smallmatrix} 1 & 0 \\ t & t^2(\al+1) 
\end{smallmatrix}\right)$, for $\al\neq -1$. 
Comparing $c_{1,1}(B_1(\al))=\frac{(\al-1)^2}{\al^2+1}$ and $c_{i,j}(B_1(-1))=2$ 
for $\al^2+1\neq 0$, we see that a degeneration to $B_1(-1)$ is only possible, if
$\al=-1$. But we assumed $\al\neq -1$. Similarly we see that  $B_1(\al)$, $\al\neq -1$
does not degenerate to $B_2(1)$. \\
The only candidates for proper degenerations of the algebras $B_2(\be)$, $\be \neq 1$ 
are again $A_5$, $B_1(-1)$ and $B_2(1)$. There is a degeneration to $A_5$ by
$g_t^{-1}=\left(\begin{smallmatrix} 1 & 0 \\ t & t^2(\be-1) 
\end{smallmatrix}\right)$, for $\be\neq 1$.
For $\be\neq 0,1$ assume that $B_2(\be)\ra_{\rm deg} B_2(1)$. Comparing $c_{1,1}$ we obtain
$\frac{(2\be-1)^2}{\be^2+(\be-1)^2}=1$, or equivalently $\be(\be-1)=0$ which was excluded.
Similarly $B_2(\be)$, ${\be \neq 1}$ cannot degenerate to $B_1(-1)$. Another possibility to
show this is to use lemma $\ref{3.6}$. \\
$B_3$ degenerates to $A_5$ by 
$g_t^{-1}=\left(\begin{smallmatrix} t^{-2} & 0 \\ 0 & t^{-1} 
\end{smallmatrix}\right)$, and to $B_1(-1)$ by
$g_t^{-1}=\left(\begin{smallmatrix} -t & 0 \\ 0 & 1 
\end{smallmatrix}\right)$. Because $c_{1,1}(B_3)=2$ and  $c_{1,1}(B_2(1))=1$, there is
no degeneration from $B_3$ to $B_2(1)$. \\
$B_5$ degenerates to $A_5$ by 
$g_t^{-1}=\left(\begin{smallmatrix} t^{-2} & 0 \\ 0 & t^{-1} 
\end{smallmatrix}\right)$, and to $B_2(1)$ by 
$g_t^{-1}=\left(\begin{smallmatrix} t^{-1} & 0 \\ 0 & 1 
\end{smallmatrix}\right)$. Because $c_{1,1}(B_5)=1$ and  $c_{1,1}(B_1(-1))=2$, there is
no degeneration from $B_5$ to $B_1(-1)$.
\end{proof}

\begin{cor}
The Hasse diagram of degenerations in $\CP_2(\C)$ is given as follows:\\[0.2cm]
$$
\begin{xy}
\xymatrix{
 &  A_3 \ar[ld] \ar[d] & B_4 \ar[d]_{\al=-2} \ar[rd]^{\be=-1} & & & \\
A_2\ar[rrrd]  &  A_4\ar[rrd]  & B_1(\al)_{\al\neq -1}\ar[rd] & B_2(\be)_{\be\neq 1}\ar[d]
 & B_3 \ar[d] \ar[ld] & B_5\ar[d] \ar[lld]\\
 & & & A_5 \ar[d] & B_1(-1) \ar[ld] & B_2(1) \ar[lld] \\
 & & & A_1 & & \\
}
\end{xy}
$$
% Alternativen:
%$$
%\begin{xy}
%\xymatrix{
% &  A_3 \ar[ld] \ar[d] & B_4 \ar[d]_{\al=-2} \ar[rd]^{\be=-1} & & & \\
%A_2\ar[rrd]  &  A_4\ar[rd]  & B_1(\al)_{\al\neq -1}\ar[d] & B_2(\be)_{\be\neq 1}\ar[ld]
% & B_3 \ar[ld] \ar[lld] & B_5\ar[ld] \ar[llld]\\
% & & A_5 \ar[d] & B_1(-1) \ar[ld] & B_2(1) \ar[lld] & \\
% & & A_1 & & & \\
%}
%\end{xy}
%$$

%$$
%\begin{xy}
%\xymatrix{
%&  A_3 \ar[ld] \ar[d] & & B_4 \ar[d]_{\al=-2} \ar[rd]^{\be=-1} & & & \\
%A_2\ar[rd]  &  A_4\ar[d] & B_5 \ar[d]\ar[rd]  & B_1(\al)_{\al\neq -1}\ar[d] & 
% B_2(\be)_{\be\neq 1}\ar[ld] & B_3 \ar[ld] \ar[lld] \\
%& A_5 \ar[d] & B_2(1) \ar[rd] & A_5 \ar[d] & B_1(-1) \ar[ld] & \\
%& A_1 & & A_1 & & \\
%}
%\end{xy}
%$$
\end{cor}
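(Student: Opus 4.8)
The plan is to read the Hasse diagram directly off the orbit-closure classification of the preceding Theorem, with essentially no new computation required. Recall that the Hasse diagram of the degeneration order draws an arrow $A\to B$ exactly for the \emph{covering} relations: those proper degenerations $A\ra_{\rm deg}B$ that do not factor, i.e.\ for which there is no third algebra $C$ (isomorphic to neither $A$ nor $B$) with $A\ra_{\rm deg}C$ and $C\ra_{\rm deg}B$. Since degeneration defines a partial order with transitivity (Section~2), the full relation is recovered as the transitive closure of the covering arrows, so it suffices to (i) list all proper degenerations supplied by the Theorem and (ii) delete those that pass through an intermediate algebra.

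For step (i) I would transcribe the table of the Theorem into the relation $A\ra_{\rm deg}B$, organized by the four orbit-dimension layers $\{A_3,B_4\}$, $\{A_2,A_4,B_1(\al)_{\al\neq-1},B_2(\be)_{\be\neq 1},B_3,B_5\}$, $\{A_5,B_1(-1),B_2(1)\}$ and $\{A_1\}$, ordered by $\dim\Der$. Since $A_1$ is the unique sink, every relation $A\ra_{\rm deg}A_1$ with $A$ above the second-from-bottom layer factors through some member of $\{A_5,B_1(-1),B_2(1)\}$ and is therefore not a cover; the only covers into $A_1$ are $A_5\ra_{\rm deg}A_1$, $B_1(-1)\ra_{\rm deg}A_1$ and $B_2(1)\ra_{\rm deg}A_1$. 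Running the same check one layer up, the arrows into $A_5$ from the dimension-$3$ layer ($A_2,A_4,B_1(\al),B_2(\be),B_3,B_5$) are all covers, whereas $A_3\ra_{\rm deg}A_5$ and $B_4\ra_{\rm deg}A_5$ are \emph{not}, since they factor as $A_3\ra_{\rm deg}A_2\ra_{\rm deg}A_5$ and $B_4\ra_{\rm deg}B_1(-2)\ra_{\rm deg}A_5$. Finally the top layer contributes the covers $A_3\ra_{\rm deg}A_2$, $A_3\ra_{\rm deg}A_4$, $B_4\ra_{\rm deg}B_1(-2)$ and $B_4\ra_{\rm deg}B_2(-1)$, together with $B_3\ra_{\rm deg}B_1(-1)$ and $B_5\ra_{\rm deg}B_2(1)$ from the middle layer.

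Collecting these covering relations yields precisely the arrows drawn in the diagram; the two edges emanating from $B_4$ carry the labels $\al=-2$ and $\be=-1$ to record which member of the one-parameter families $B_1(\al)$ and $B_2(\be)$ is reached. The only point demanding care is the bookkeeping of redundancy: for each relation one must check that a candidate intermediate algebra genuinely sits strictly between the two in the order, and conversely that no drawn edge secretly factors. Because the poset is small and the families $B_1(\al),B_2(\be)$ degenerate only to the common sinks $A_5,A_1$ (so they introduce no additional intermediate levels), this verification is routine, and it is the only — and entirely mechanical — obstacle.
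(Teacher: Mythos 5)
Your proposal is correct and follows essentially the same route as the paper: the paper gives no separate proof of this corollary, treating the diagram as the immediate transcription of the Theorem's orbit-closure table, and your explicit extraction of the covering relations is exactly that reading. Your bookkeeping also checks out in detail — the suppressed transitive edges (e.g.\ $A_3\ra_{\rm deg}A_5$ via $A_2$, $B_4\ra_{\rm deg}A_5$ via $B_1(-2)$, and all arrows into $A_1$ from above the third layer) and the labels $\al=-2$, $\be=-1$ on the edges from $B_4$ into the families $B_1(\al)$, $B_2(\be)$ match the drawn diagram.
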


\begin{cor}
The Hasse diagram for degenerations of Novikov algebra structures in $\CP_2(\C)$ is given as 
follows:\\[0.2cm]
$$
\begin{xy}
\xymatrix{
 &  A_3 \ar[ld] \ar[d] & & \\
A_2\ar[rd]  &  A_4\ar[d]  & B_2(\be)_{\be\neq 1}\ar[ld] & B_5 \ar[ld] \ar[lld]\\
  & A_5\ar[d] &  B_2(1)\ar[ld] & \\
 & A_1 & &
}
\end{xy}
$$
\end{cor}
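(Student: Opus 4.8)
The plan is to deduce this corollary directly from the orbit-closure Theorem above, by restricting attention to the Novikov algebras among the $2$-dimensional pre-Lie algebras, namely $A_1,A_2,A_3,A_4,A_5,B_2(\be)_{\be\in\C}$ and $B_5$. The central observation is that the Novikov identity \eqref{nov} can be rewritten as the polynomial operator identity $T(y,z)=[R(y),R(z)]=R(y)R(z)-R(z)R(y)=0$ for all $y,z\in A$: indeed $(x\cdot y)\cdot z=R(z)R(y)x$ and $(x\cdot z)\cdot y=R(y)R(z)x$, so the two sides of \eqref{nov} agree for all $x$ precisely when the right multiplications commute. Thus the Novikov algebras form exactly the zero set of an operator identity in the sense of Lemma \ref{3.1}, and that lemma shows this class is closed under degeneration: if $A$ is Novikov and $A\ra_{\rm deg}B$, then $B$ is Novikov as well.

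First I would use this closure property to argue that the Novikov algebras form a down-closed subset of the degeneration poset on $\CP_2(\C)$. Consequently, any intermediate algebra $C$ in a chain $A\ra_{\rm deg}C\ra_{\rm deg}B$ joining two Novikov algebras is itself Novikov. This is the step that makes the restriction clean: it guarantees that the covering relations of the induced subposet coincide with the covering relations of the full poset restricted to the Novikov vertices, so the Novikov Hasse diagram is obtained simply by deleting the non-Novikov vertices $B_4$, $B_3$, $B_1(-1)$ and the family $B_1(\al)_{\al\neq -1}$ (whose single Novikov member $B_1(0)\simeq B_2(0)$ already appears under the $B_2$ family) together with all their incident arrows from the full Hasse diagram of the preceding Corollary. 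Reading off the surviving relations from the Theorem then gives $A_3\ra_{\rm deg}A_2,A_4,A_5,A_1$; $A_2\ra_{\rm deg}A_5,A_1$; $A_4\ra_{\rm deg}A_5,A_1$; $B_2(\be)_{\be\neq1}\ra_{\rm deg}A_5,A_1$; $B_5\ra_{\rm deg}A_5,B_2(1),A_1$; $A_5\ra_{\rm deg}A_1$; and $B_2(1)\ra_{\rm deg}A_1$. Ordering by orbit dimension $n^2-\dim\Der$ and passing to the transitive reduction (for instance $A_3\ra_{\rm deg}A_5$ and $A_3\ra_{\rm deg}A_1$ factor through $A_2$, and the analogous non-covering pairs through $A_5$ or $B_2(1)$) produces exactly the stated diagram.

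I expect the only real subtlety to be the justification in the second step that the Hasse diagram of the induced subposet equals the restriction of the full Hasse diagram. This is not automatic for an arbitrary subset, since a covering relation between two Novikov algebras could in principle be refined by a non-Novikov algebra of intermediate orbit dimension; but it cannot happen here precisely because the Novikov class is preserved under degeneration, so any such refining algebra would itself be Novikov and hence already present. The individual degenerations require no new work, as explicit realizations $g_t$ were exhibited in the proof of the Theorem, and the relevant non-degenerations among Novikov algebras, such as $B_2(\be)_{\be\neq1}\not\ra_{\rm deg}B_2(1)$ and $B_5\not\ra_{\rm deg}B_1(-1)$, were already excluded there using the invariants $c_{i,j}$.
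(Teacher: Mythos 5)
Your proposal is correct and follows essentially the same route as the paper, which presents the corollary as a direct restriction of the Theorem's orbit-closure table to the Novikov algebras $A_1,\ldots,A_5$, $B_2(\be)$, $B_5$. Your observation that the Novikov identity \eqref{nov} is the operator identity $[R(y),R(z)]=0$, so that by Lemma~\ref{3.1} the Novikov class is closed under degeneration and hence forms a down-set whose induced covering relations are exactly the restricted ones, supplies cleanly the justification the paper leaves implicit (which could otherwise only be verified by inspecting the Theorem's table case by case).
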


\end{document}